\numberwithin{equation}{section}
\theoremstyle{plain}
\newtheorem{thm}{Theorem}[section]
\newtheorem{coro}[thm]{Corollary}
\newtheorem{prop}[thm]{Proposition}
\newtheorem{lem}[thm]{Lemma}
\newtheorem{defi}[thm]{Definition}
\theoremstyle{definition}
\theoremstyle{remark}
\newcommand{\R}{\mathbb{R}}
\newcommand\ind{{\bf{1}}}
\newcommand{\PP}{\mathbb{P}}
\newcommand\N{{\mathbb N}}
\newcommand\pref[1]{(\ref{#1})}
\let \eps\varepsilon
\newcommand\CC{{\cal C}}
\newcommand\A{{\cal A}}
\newcommand\M{{\cal M}}
\newcommand\E{{\mathbb E}}
\def\<#1,#2>{\left<#1,#2\right>}
\newcommand\xx{\mathbf{X}}
\newcommand\yy{\mathbf{Y}}
\newcommand\LL{{\cal L}}
\newcommand\FF{{\cal F}}
\def\Prim{({\cal P})}
\def\Qrim{({\cal Q})}
\def\PPst{({\cal P}^*)}
\begin{document}

\title{Pareto efficiency for the concave order and multivariate
comonotonicity }
\author{G. Carlier \thanks{{\scriptsize CEREMADE, UMR CNRS 7534,
Universit\'e Paris IX Dauphine, Pl. de Lattre de Tassigny, 75775 Paris Cedex
16, FRANCE \texttt{carlier@ceremade.dauphine.fr}}}, R.-A. Dana \thanks{%
{\scriptsize CEREMADE, UMR CNRS 7534, Universit\'e Paris IX Dauphine, Pl. de
Lattre de Tassigny, 75775 Paris Cedex 16, FRANCE \texttt{%
dana@ceremade.dauphine.fr}}}, A. Galichon \thanks{
{\scriptsize D\'epartement d'Economie, Ecole Polytechnique, \texttt{\
alfred.galichon@polytechnique.edu}. Galichon gratefully acknowledges
support from Chaire EDF-Calyon ``Finance and D\'{e}veloppement Durable,'' Chaire  FDIR ``Socially Responsible Investments and Value Creation'' and Chaire Axa ``Assurance et Risques Majeurs'' and FiME,
Laboratoire de Finance des March\'{e}s de l'Energie (www.fime-lab.org).}}}
\maketitle

\begin{abstract}
  This paper studies efficient risk-sharing rules  for
the concave dominance order. For a univariate risk, it follows from a \emph{comonotone dominance principle}, due to Landsberger and Meilijson \cite{Landsberger}, that efficiency is
characterized by a comonotonicity condition. The goal of the paper is to generalize the comonotone dominance principle as well as the equivalence between efficiency and comonotonicity to the
multi-dimensional case. The multivariate case is more involved (in particular because there is no
immediate extension of the notion of comonotonicity), and it is addressed  by using
techniques from convex duality and optimal transportation.
\end{abstract}

\textbf{Keywords:} concave order, stochastic dominance, comonotonicity,
  efficiency, multivariate risk-sharing.

\section{Introduction}

\textbf{Motivation. }The aim of this paper is to study Pareto efficient allocations of risky consumptions of
multiple goods in a contingent exchange economy. In this framework, consumption goods are imperfect substitutes, hence consumption is measured along several different units, instead of being denominated in one single monetary value. These units can be for instance
material consumption and labor, or future consumptions at various subsequent dates, or
currency units with limited exchangeability. In this setting, risky consumption can no longer be represented as a \emph{%
random variable}, but as a \emph{random vector}.

Agents are assumed to have incomplete preferences associated with the
 \emph{concave order}: a risk (random vector) $X$ is preferred to a risk $Y$ in the concave order whenever \emph{every} risk-averse expected utility decision-maker prefers $X$ to $Y$. Again, these preferences form an incomplete order, hence this assumption (and its empirical content) may appear as relatively weak. We shall see, however, that they in fact lead to strong predictions.

\smallskip

The motivation of the paper is to characterize efficient allocations for the concave order on observable data (for instance, insurance contracts).  In the case of  univariate risk, it is known that efficiency for the concave order is equivalent to efficiency for some strictly concave expected utility model, which in turn yields a tractable characterization of efficiency:  \emph{comonotonicity}  of the allocations. Allocations are comonotone whenever each agent's contingent consumption is a nondecreasing function of the aggregate consumption. Further, a \emph{comonotone dominance principle} can also be proven: if some initial allocation is not comonotone, there is a comonotone allocation such that every agent weakly prefers their contingent consumption in the new allocation (at least one preferring strictly). Comonotonicity fully characterizes efficiency and it is a testable and tractable property. Moreover, as a consequence of the  comonotone dominance principle, attention may be restricted to the set of    comonotone allocations, which is
convex and almost compact. Hence  existence results
may be  obtained for many risk-sharing problems  (see for instance \cite{jst} in the framework
of risk measures, or \cite{cd2}, \cite{cd3} for classes of law invariant and
concave utilities).

 \smallskip

\textbf{Main results. } This paper  is devoted to the extension of the  comonotone dominance result and its application to the characterization of efficient allocations  in the multivariate setting. To this end, a definition of comonotonicity  for the multivariate case is first needed.
Roughly speaking, according to the definition of multivariate comonotonicity we adopt, an allocation $(X_{1},...,X_{p})$ (with
each $X_{i}$ being random vectors) is comonotone if it is efficient for some strictly concave expected utility model.
By first order conditions, this implies that there is a random vector $Z$ and convex functions $\varphi _{i} $ such that
$X_{i}=\nabla \varphi _{i}(Z)$, where $\nabla \varphi _{i}$ is the gradient map of $ \varphi _{i}$. This is the definition of multivariate comonotonicity used by Ekeland, Galichon and Henry in \cite{EGH09}.   The comonotone dominance principle  is next extended by solving  a variational problem. More precisely, given an initial allocation and a collection of strictly concave utility functions, we   maximize the sum of these utilities among allocations that dominate the initial allocation. We   prove, and this is   the hard part of the proof, that the corresponding optimal allocation   is necessarily comonotone. The precise   statement  of the multivariate  comonotone dominance result is, however more
complicated than in the univariate case since  it  requires the use of   weak
closures   and   a  concept  slightly stronger than strict convexity. This follows from the fact that the set of multivariate comonotone
allocations is neither  convex nor compact (even up
to constants),   contrary to the univariate case (counterexamples are given). Finally one may wonder whether the equivalence between efficient and comonotone allocations is preserved in the multivariate case. The answer is yes, up to some (interesting) technicalities. Again the    precise   statement of the result  is more complicated than in the univariate case.
When applied to the  univariate case, our proof of the  comonotone dominance result  improves upon all existing proofs    (see \cite{Landsberger},   \cite {DanaMeilijson} and \cite{Rusch}). Indeed, it addresses directly the case of many agents, it uses neither the discrete case nor a limiting argument, and no hypotheses
need be made on the aggregate endowment.

\textbf{Literature overview. } There is a distinguished tradition in modeling  preferences by  concave dominance.   Introduced in economics  by
Rothschild and Stiglitz \cite{Rothschild}, the
 concave order has then been
used in a wide variety of economic contexts. To give a few references, let us mention efficiency
pricing (Peleg and Yaari \cite{Peleg}, Chew and  Zilcha \cite{Chew}), measurement of
inequality  (Atkinson \cite{Atkinson}), and finance (Dybvig \cite{Dybvig}, Jouini
and Kallal \cite{Jouini}).

\smallskip

In dimension one, the mutuality principle arose in the early work of  Borch \cite{Borch},  Arrow
\cite{Arrow}, \cite{Arrow2} and Wilson \cite{wilson}; see also LeRoy and Werner \cite{leroy}. Landsberger and Meilijson \cite{Landsberger} proved (for two agents and  a discrete setting) that any allocation of a given aggregate risk is dominated in the sense of
 concave dominance  by a comonotone allocation. This comonotone dominance principle has been extended   to the continuous case by limiting arguments (see \cite {DanaMeilijson} and \cite{Rusch}). It implies the comonotonicity of efficient allocations for the concave order. The equivalence between comonotonicity and efficiency was only proved recently by Dana \cite{dana} for the discrete case and by Dana and Meilijson  \cite{DanaMeilijson} for the continuous case. This equivalence stimulated a line of research on comonotonicity in the insurance and finance literature, see for instance Jouini and Napp \cite{jn1}, \cite{jn2}.  On the empirical side, Townsend \cite{townsend} proposed to test whether the mutuality principle holds in three poor villages in southern India while Attanasio and Davis (\cite{Attanasio}) worked with US labor data. The general findings of these empirical studies is that comonotonicity can be usually strongly rejected. A possible explanation of why efficiency is usually not observed in the data is that the aforementioned literature only considers risk-sharing in the case of one good (monetary consumption) and does not take into account the cross-subsidy effects between several risky goods which are only imperfect substitutes. Other papers, such as Brown and Matzkin (\cite{brown}) have tried to test whether observed market data on prices, aggregate endowments and individual incomes satisfy the restrictions that are imposed by Walrasian equilibrium. In contrast to this approach, we do not assume prices to be available to the researcher.

\smallskip

The notion of multivariate comonotonicity adopted in this paper coincides (up to some technical details) with the one originally introduced by
Ekeland, Galichon and Henry in \cite{EGH09} under the name $\mu $%
-comonotonicity, in the context of risk measures. Galichon and Henry use that concept to generalize rank-dependent expected utility in \cite{GH11}. Other proposals for multivariate comonotonicity exist and are reviewed e.g. in \cite{scarsini}; however they do not seem to be related to efficient risk-sharing.
While the results of \cite{EGH09} are strongly related to maximal
correlation functionals and to the quadratic optimal transportation problem
(and in particular Brenier's seminal paper \cite{brenier}), the present
approach will rely on a slightly different optimization problem that has
some familiarities with the multi-marginals optimal transport problem of
Gangbo and \'{S}wi\c{e}ch \cite{gs}.

\smallskip

\textbf{Organization of the paper. }The paper is organized as follows. Section \ref{prel} recalls some definitions
and various characterizations of comonotonicity in the univariate case. Section \ref{secuniv} revisits  the comonotone dominance principle of \cite{Landsberger} and characterizes efficient risk sharing in the univariate case.  A notion of
multivariate comonotonicity is introduced in Section \ref{secmult}, an
analogue of the comonotone dominance principle is stated, and efficient
sharing-rules are   characterized as the weak closure of comonotone allocations.  Section \ref{seccl} concludes the paper. Proofs are gathered in section \ref{proofs}.

\section{Preliminaries}

\label{prel}

Given as primitive is a probability space $(\Omega ,\mathcal{F},\mathbb{P})$. For every (univariate or multivariate) random vector $X$ on such space,
the law of $X$ is denoted $\mathcal{L}(X)$. Two random vectors $X$ and $Y$ are called equivalent in distribution (denoted $X\sim Y$), if $\mathcal{L}(X)=\mathcal{L}(Y)$.

\begin{defi}
Let  $X$ and $Y$ be bounded random vectors with values in $\mathbb{R}^{d}$,
then $X$ dominates $Y$  for the concave order,   denoted $X \succcurlyeq Y$, if and only if ${\mathbb{E}}(\varphi (X))\leq {%
\mathbb{E}}(\varphi (Y))$ for every convex function $\varphi \;:\;\mathbb{R}%
^{d}\rightarrow \mathbb{R}$. If, in addition, ${\mathbb{E}}(\varphi (X))<{%
\mathbb{E}}(\varphi (Y))$ for some  convex function $\varphi $,
then $X$ is said to dominate $Y$ strictly.
\end{defi}

As the paper    makes  extensive use of  \emph{convex} analysis  (Legendre transforms, infimal convolutions, convex duality), the  concave order is  defined here  in terms of convex
 loss functions while
usually defined with concave utilities. Clearly the definition above coincides with the
standard one. As
  $X \succcurlyeq Y$ implies that ${\mathbb{E}}(X)={\mathbb{E}}(Y)
$,   comparing risks for $ \succcurlyeq $ only makes sense
for random vectors with the same mean. We refer to   Rothschild and Stiglitz \cite%
{Rothschild}  and   F\"{o}llmer and  Schied \cite{FolSch} for various characterizations of concave
dominance in the univariate case and to   M\"{u}ller and  Stoyan \cite{Mul} for the multivariate case. Using a classical result of Cartier, Fell and Meyer (see \cite{cfm} or \cite{strassen}), one deduces a convenient characterization (see section \ref{proofs} for a proof) of strict dominance as follows:

%The paper considers   the concave order
%rather than  second order stochastic dominance widely used in
%economics. For the sake of completeness, recall that given two
%eal-valued bounded random vectors $X$ and $Y$, $X$ is said to dominate $Y$
%for second-order stochastic dominance (notation $X{\displaystyle\succeq }%
%_{2}Y$) whenever ${\mathbb{E}}(u(X))\geq {\mathbb{E}}(u(Y))$ for every
%concave and \emph{nondecreasing} function $u$ : $\mathbb{R}^{d}\rightarrow
%\mathbb{R}$. As is well-known,   $X \succcurlyeq Y$ if
%and only if $X{\displaystyle\succeq }_{2}Y$ and ${\mathbb{E}}(X)={\mathbb{E}}%
%(Y)$.

\begin{lem}\label{caractstrict}
Let $X$ and $Y$ be bounded random vectors with values in $\mathbb{R}^{d}$, then the following statements are equivalent:

\begin{enumerate}
\item $X$ strictly dominates $Y$, \item $X \succcurlyeq Y$ and $\LL(X)\neq \LL(Y)$,
\item $X \succcurlyeq Y$ and for every strictly convex function $\varphi,\; $  ${\mathbb{E}}(\varphi (X))<{%
\mathbb{E}}(\varphi (Y))$.
\end{enumerate}

\end{lem}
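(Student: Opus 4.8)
The plan is to establish the chain $3 \Rightarrow 1 \Rightarrow 2 \Rightarrow 3$, where the first two implications are immediate and the third is the substantive one. Throughout, boundedness of $X$ and $Y$ guarantees that $\varphi(X)$ and $\varphi(Y)$ are integrable for every convex (hence locally bounded) $\varphi$, so all the expectations below are finite. For $3 \Rightarrow 1$ it suffices to apply the hypothesis to a single strictly convex function, for instance $\varphi(x)=|x|^{2}$, which is in particular convex and yields the strict inequality required by the definition of strict dominance. For $1 \Rightarrow 2$, observe that if $\LL(X)=\LL(Y)$ then ${\mathbb E}(\varphi(X))={\mathbb E}(\varphi(Y))$ for every $\varphi$, contradicting the existence of a convex function with strict inequality; hence $\LL(X)\neq\LL(Y)$.

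The heart of the matter is $2 \Rightarrow 3$, and this is where the Cartier--Fell--Meyer (Strassen) theorem enters. Since $X \succcurlyeq Y$, the law $\LL(Y)$ is a dilation of $\LL(X)$, so there exist, on a suitable probability space, random vectors $\tilde X \sim X$ and $\tilde Y \sim Y$ forming a martingale coupling, that is ${\mathbb E}(\tilde Y\mid\tilde X)=\tilde X$ almost surely. Fix an arbitrary strictly convex $\varphi$. Conditional Jensen's inequality gives ${\mathbb E}(\varphi(\tilde Y)\mid\tilde X)\geq\varphi({\mathbb E}(\tilde Y\mid\tilde X))=\varphi(\tilde X)$ almost surely, so, after taking expectations,
\[
{\mathbb E}(\varphi(Y))-{\mathbb E}(\varphi(X))={\mathbb E}\bigl[{\mathbb E}(\varphi(\tilde Y)\mid\tilde X)-\varphi(\tilde X)\bigr]\geq 0.
\]
Since the integrand is nonnegative, equality would force ${\mathbb E}(\varphi(\tilde Y)\mid\tilde X)=\varphi(\tilde X)$ almost surely. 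For a strictly convex $\varphi$ this can only happen if the conditional law of $\tilde Y$ given $\tilde X$ is almost surely concentrated at its barycenter $\tilde X$, which gives $\tilde Y=\tilde X$ almost surely and hence $\LL(Y)=\LL(\tilde Y)=\LL(\tilde X)=\LL(X)$, contradicting $\LL(X)\neq\LL(Y)$. The inequality is therefore strict, which is exactly statement 3.

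The one delicate point is the equality analysis in the conditional Jensen inequality: I would argue that, for a strictly convex function on $\mathbb{R}^{d}$, equality forces a degenerate conditional distribution by choosing a subgradient $p$ of $\varphi$ at $\tilde X$ and writing $\varphi(y)\geq\varphi(\tilde X)+\langle p,y-\tilde X\rangle$, an inequality that is strict for $y\neq\tilde X$; integrating against the conditional law then yields $\tilde Y=\tilde X$ almost surely. This should be handled through the strict supporting-hyperplane characterization of strict convexity rather than any one-dimensional reduction. The existence of the martingale coupling is precisely the content of the Cartier--Fell--Meyer result cited before the lemma, so no compactness or regularity assumptions beyond boundedness are needed.
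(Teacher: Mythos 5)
Your proof is correct and takes essentially the same route as the paper's: the implications $3\Rightarrow 1$ and $1\Rightarrow 2$ are dispatched as immediate, and the substantive step $2\Rightarrow 3$ rests on the Cartier--Fell--Meyer theorem, realized in your write-up as a martingale coupling ${\mathbb E}(\tilde Y\mid \tilde X)=\tilde X$, which is exactly the paper's dilation kernel $(T_x)_{x\in B}$ with barycenter $x$, followed by conditional Jensen and an equality analysis using strict convexity. Your explicit supporting-hyperplane argument merely spells out the step the paper compresses into ``one deduces from Jensen's inequality,'' so the two proofs coincide in substance.
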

Given $X\in L^{\infty }(\Omega ,\mathbb{R}^{d})$, a random vector of
aggregate risk of dimension $d\geq 1$, the
  set of admissible allocations or risk-sharing of $X$ among $p$
agents is denoted $\mathcal{A}(X)$:
\begin{equation*}
\mathcal{A}(X):=\{\mathbf{Y}=(Y_{1},...,Y_{p})\in L^{\infty }(\Omega ,%
\mathbb{R}^{d})\mbox{ : }\sum_{i=1}^{p}Y_{i}=X\}.
\end{equation*}%
For simplicity, the dependence of $\mathcal{A}%
(X)$ on the number $p$ of agents does not appear explicitly.   A concept of dominance for allocations of $X$ is defined next.
\begin{defi}For $d\geq 1$, let $\xx=(X_1,..., X_p)$ and $\mathbf{Y}:=(Y_1,...,Y_p)$ be in $\mathcal{A}%
(X)$. Then $\xx$ is said to dominate $\mathbf{Y}$ if $X_i \succcurlyeq Y_i$
for every $i\in \{1,...,p\}$. If, in addition, there is an $i\in \{1,...,p\}$
such that $X_i$ strictly dominates $Y_i$, then $\xx$ is said to strictly
dominate $\mathbf{Y}$. An allocation $\xx\in \mathcal{A}(X)$ is  Pareto-efficient (for the concave order) if there is no allocation in $%
\mathcal{A}(X)$ that strictly dominates $\xx$.

%\footnote{For $d=1$ the concave order coincides with second order
%stochastic dominance on $\mathcal{A}(X)$. Indeed if $(X_{1},...,X_{p})$ and $%
%(Y_{1},...,Y_{p})$ belong to $\mathcal{A}(X)$ and if $X_{i}\succeq _{2}Y_{i}$
%for every $i$, then for all $i$, ${\mathbb{E}}(X_{i})\geq {\mathbb{E}}%
%(Y_{i}) $. Since $\sum_{i}{\mathbb{E}}(X_{i})=\sum_{i}{\mathbb{E}}(Y_{i})={%
%\mathbb{E}}(X)$, one obtains that ${\mathbb{E}}(X_{i})={\mathbb{E}}(Y_{i})$
%or all $i$ and, as recalled above, the two notions of  dominance coincide on random
%variable with same expectations.}.
\end{defi}

It may easily be verified that dominance of allocations can also be defined as follows. Let  $\xx%
 $ and $\mathbf{Y} $ be in $\mathcal{A}%
(X) $, then $\xx$ dominates $\mathbf{Y}$ if and only if
\begin{equation}\label{sum}
{\mathbb{E}}(\sum_{i=1}^p\varphi _{i}(X_{i}))\leq {\mathbb{E}}(\sum_{i=1}^p\varphi
_{i}(Y_{i})),
\end{equation}%
for every collection of convex functions $\varphi _{i}$ : $\mathbb{R}%
^{d}\rightarrow \mathbb{R}$. Moreover, $\xx$ strictly dominates $\mathbf{Y}$
if and only if the previous inequality is strict for some collection of
  convex functions $\varphi _{i}$ : $\mathbb{R}^{d}\rightarrow
\mathbb{R}$ . Note that from lemma \ref{caractstrict}, it is equivalent to require that the inequality is strict for every collection of strictly convex functions.
 Therefore, if   $\xx$ is the solution of the problem
 \begin{equation}\label{sum1}
 \inf \Big\{ \sum_{i=1}^p \E(\varphi_i (Y_i)) \; : \;  (Y_1,...,Y_p)\in \A(X)\Big\}\end{equation}
  for some collection of
 strictly convex functions $\varphi _{i}$, then $\xx$ is efficient. Finally, recall that  in the univariate case, comonotonicity is defined by:
\begin{defi}\label{defcomon1d}
A collection  $(X_{1},...,X_{p})$  of $p$ real-valued random variables on $(\Omega ,%
\mathcal{F},\mathbb{P})$ is comonotone if for every $(i,j)\in\{1,...,p\}^2$,
\begin{equation*}
(X_{i}(\omega ^{\prime })-X_{i}(\omega ))(X_{j}(\omega ^{\prime
})-X_{j}(\omega ))\geq 0%
\mbox{ for $\PP\otimes \PP$-a.e. $(\omega,
\omega')\in \Omega^2$}.
\end{equation*}%
\end{defi}
It is well-known that comonotonicity of $(X_{1},...,X_{p})$ is equivalent to
the fact that each $X_i$ can be written as a nondecreasing function of the
sum $X=\sum_i X_i$ (see for instance Denneberg \cite{denneberg}). Therefore $(X_1,..., X_p)$ is comonotone if and only if there are nondecreasing functions $f_i$ summing to the
identity such that $X_i=f_i(X)$. Note  that the functions $f_i$ are
  all $1$-Lipschitz . The
extension of this notion to the multivariate case (i.e when each $X_i$ is $%
\mathbb{R}^d$-valued) is not immediately obvious and will be addressed in Section \ref%
{secmult}.

We now provide another characterization of comonotonicity based on the notion of maximal correlation.
From now on, assume that the underlying probability space $(\Omega ,\mathcal{F},\mathbb{P})$ is non-atomic which
means that there is no $A\in \mathcal{F}$ such that for every $B\in \mathcal{%
F}$ if $\mathbb{P}(B)<\mathbb{P}(A)$ then $\mathbb{P}(B)=0$. It is
well-known that $(\Omega ,\mathcal{F},\mathbb{P})$ is non-atomic if and only
if a random variable that is uniformly distributed on $\left[ 0,1\right]$, which is denoted
  $U\sim \mathcal{U}\left( \left[ 0,1\right] \right)$, can be constructed on $(\Omega ,\mathcal{F},\mathbb{P})$. Let $Z\in L^{1}(\Omega ,\mathcal{F},\mathbb{P})$,
and define for every $X\in L^{\infty }(\Omega ,\mathcal{F},\mathbb{P})$, both $Z$ and $X$ being univariate
here, the maximal correlation functional:
\begin{equation}\label{defcormax}
\varrho _{Z}(X):=\sup_{\tilde{X}\sim X}{\mathbb{E}}(Z\tilde{X})=\sup_{\tilde{%
Z}\sim Z}{\mathbb{E}}(\tilde{Z}X)=\sup_{\tilde{Z}\sim Z,\;\tilde{X}\sim X}{%
\mathbb{E}}(\tilde{Z}\tilde{X}).
\end{equation}
The functional $\varrho _{Z}$ has extensively been discussed in economics
and in finance, therefore only a few useful facts are recalled. Let $F_{X}^{-1}$
be the quantile function of $X$, that is the pseudo-inverse of distribution
function $F_{X}$. From Hardy-Littlewood's inequality, one has
\begin{equation*}
\varrho _{Z}(X)=\int_{0}^{1}F_{X}^{-1}(t)F_{Z}^{-1}(t)dt,
\end{equation*}%
and the supremum in \pref{defcormax} is achieved by any pair $(\tilde{Z},\tilde{X})$ of
comonotone random variables $(F_{Z}^{-1}(U),F_{X}^{-1}(U))$ for $U$
uniformly distributed. By symmetry, one can either fix $Z$ or fix $X$.
Fixing for instance $Z$, the supremum is achieved by $F_{X}^{-1}(U)$ where $%
U\sim \mathcal{U}\left( \left[ 0,1\right] \right) $ and $%
Z=F_{Z}^{-1}(U)$. When $Z$ is non-atomic, there is a unique $U=F_{Z}(Z)$
such that $Z=F_{Z}^{-1}(U)$, and the supremum is uniquely attained by the
non-decreasing function of $Z,\;F_{X}^{-1}\circ F_{Z}(Z)$:
\begin{equation}
\varrho _{Z}(X)={\mathbb{E}}(ZF_{X}^{-1}\circ F_{Z}(Z)).  \label{HL}
\end{equation}%
Also note that  $\varrho _{Z}$ is subadditive: $\varrho _{Z}(\sum_i  X_i) \leq \sum_i \varrho _{Z }( X_i)$.

\begin{prop}
\label{maxcor} Let $(X_1,...,X_p)$ be in $L^{\infty}(\Omega ,\mathcal{F},\mathbb{P})$. The following assertions are equivalent:

\begin{enumerate}
\item $(X_1,..., X_p)$ are comonotone,

\item for any non-atomic $Z\in L^{1}(\Omega ,\mathcal{F},\mathbb{P})$,
\begin{equation}  \label{sadd}
\varrho _{Z}\left(\sum_i  X_i\right) = \sum_i \varrho _{Z }\left( X_i\right),
\end{equation}

\item for some non-atomic $Z\in L^{1}(\Omega ,\mathcal{F},\mathbb{P})$, (\ref{sadd}) holds true.
\end{enumerate}
\end{prop}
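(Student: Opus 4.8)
The plan is to prove the cycle $(1)\Rightarrow(2)\Rightarrow(3)\Rightarrow(1)$, with $(3)\Rightarrow(1)$ being the substantial implication. For $(1)\Rightarrow(2)$, I would use the characterization recalled after Definition \ref{defcomon1d}: comonotonicity of $(X_1,\dots,X_p)$ means $X_i=f_i(X)$ with each $f_i$ nondecreasing and $\sum_i f_i=\mathrm{id}$. Since for nondecreasing $f$ one has $F_{f(X)}^{-1}=f\circ F_X^{-1}$ a.e., summing over $i$ gives $\sum_i F_{X_i}^{-1}=(\sum_i f_i)\circ F_X^{-1}=F_X^{-1}$. Inserting this into the Hardy--Littlewood representation $\varrho_Z(W)=\int_0^1 F_W^{-1}(t)F_Z^{-1}(t)\,dt$ and using linearity of the integral yields \eqref{sadd} for every non-atomic $Z$.

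The implication $(2)\Rightarrow(3)$ is immediate, since non-atomicity of $(\Omega,\mathcal F,\mathbb P)$ guarantees the existence of a uniform --- hence non-atomic --- random variable, so at least one admissible $Z$ exists and (2) may be specialized to it. For $(3)\Rightarrow(1)$, fix a non-atomic $Z$ realizing \eqref{sadd}, put $X=\sum_i X_i$, and first produce a rearrangement $\hat Z\sim Z$ which is comonotone with $X$ and attains $\varrho_Z(X)=\mathbb{E}(\hat Z X)$; such a $\hat Z$ comes from the optimal comonotone coupling $(F_Z^{-1}(U),F_X^{-1}(U))$, using non-atomicity of $\Omega$ to split the atoms of $X$ if needed, so that $\mathbb{E}(\hat Z X)=\int_0^1 F_Z^{-1}(t)F_X^{-1}(t)\,dt=\varrho_Z(X)$. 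Because $\hat Z\sim Z$, for each $i$ one has $\mathbb{E}(\hat Z X_i)\le\varrho_Z(X_i)$, while summation gives $\sum_i\mathbb{E}(\hat Z X_i)=\mathbb{E}(\hat Z X)=\varrho_Z(X)=\sum_i\varrho_Z(X_i)$ by \eqref{sadd}. A term-by-term comparison then forces $\mathbb{E}(\hat Z X_i)=\varrho_Z(X_i)$ for every $i$.

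To conclude, I would exploit that $\hat Z$ is non-atomic (being distributed as $Z$). Fixing $\hat Z$ and rearranging on the $X_i$ side, the chain $\mathbb{E}(\hat Z X_i)=\varrho_Z(X_i)=\varrho_{\hat Z}(X_i)=\sup_{\tilde X_i\sim X_i}\mathbb{E}(\hat Z\tilde X_i)$ shows that $X_i$ itself attains this supremum; by the uniqueness statement following \eqref{HL} --- valid precisely because $\hat Z$ is non-atomic --- the maximizer is unique and equals $F_{X_i}^{-1}\circ F_{\hat Z}(\hat Z)$, a nondecreasing function of $\hat Z$. Thus every $X_i$ is a nondecreasing function of the single random variable $\hat Z$, whence $(X_i(\omega')-X_i(\omega))(X_j(\omega')-X_j(\omega))\ge 0$ $\mathbb{P}\otimes\mathbb{P}$-a.e., i.e. comonotonicity. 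I expect the main obstacle to lie in this last direction: the careful construction of $\hat Z$ when $X$ is atomic, and the correct invocation of uniqueness with $\hat Z$ playing the role of $Z$. The pivotal --- though short --- observation is the term-by-term equality, which converts the single scalar identity \eqref{sadd} into the $p$ separate optimality conditions $\mathbb{E}(\hat Z X_i)=\varrho_Z(X_i)$ from which comonotonicity is read off.
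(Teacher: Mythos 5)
Your proof is correct and follows essentially the same route as the paper's: you rearrange $Z$ into an optimal coupling $\hat Z$ with the sum (the paper's $Z_{X+Y}$), deduce the term-by-term saturation $\mathbb{E}(\hat Z X_i)=\varrho_Z(X_i)$ from (\ref{sadd}) exactly as the paper does, and then invoke the uniqueness of the maximizer in (\ref{HL}) for the non-atomic $\hat Z$ to write each $X_i$ as a nondecreasing function of the single variable $\hat Z$. The only differences are cosmetic: you treat general $p$ where the paper restricts to $p=2$ for simplicity, and you spell out the construction of $\hat Z$ on the same space (splitting atoms of $X$ via non-atomicity), a point the paper leaves implicit.
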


\begin{proof} For the sake of simplicity, we restrict ourselves to $p=2$ and set $(X_1,X_2)=(X,Y)$. Point 1 implies point 2 since $F^{-1}_{X+ Y}=F^{-1}_X+F^{-1}_Y$  for  comonotone $X$ and $Y$. To show that point 3 implies point 1, assume that for some non-atomic $Z$, one has (\ref{sadd}), which by sublinearity is equivalent to   $\varrho _{Z } ( X+Y)\geq \varrho _{Z } ( X )+\varrho _{Z } ( Y)$.  Let $Z_{X+Y}$ (resp. $ Z_{X }$ and $ Z_{Y}$) be distributed as $Z$ and solve     $\sup_{\tilde{Z}\sim Z }E (\tilde Z  X )$ (resp. $\varrho _{Z } ( X )$ and $\varrho _{Z } ( Y)$). One then has:
$$\E(Z_{X+Y}(X+Y))\geq \E(Z_{X }X)+\E(Z_{Y }Y).$$
As $\E(Z_{X+Y}X)\leq \E(Z_{X }X)$ and $\E(Z_{X+Y}Y)\leq \E(Z_{Y }Y)$, it follows $\E(Z_{X+Y}X)= \E(Z_{X }X)=\varrho _{Z } ( X)$ and $\E(Z_{X+Y}Y)= \E(Z_{Y }Y)=\varrho _{Z } ( Y)$,  hence  from (\ref{HL}),  $X=F^{-1}_X\circ F _{Z_{X+Y}}({Z_{X+Y}}))$ and $Y=F^{-1}_Y\circ F _{Z_{X+Y}}({Z_{X+Y}}))$, proving comonotonicity.\end{proof}

Proposition \ref{maxcor} was the starting point of Ekeland, Galichon and
Henry \cite{EGH09} for providing a multivariate generalization of the
concept of comonotonicity. In the sequel we shall further discuss this
multivariate extension and compare it with the one proposed in the present
paper.

\section{The univariate case}

\label{secuniv}

A landmark result, due to Landsberger and Meilijson \cite%
{Landsberger} states that any allocation is dominated by a comonotone one.
The original proof was given in the discrete case for two agents, and the
results were extended to the general case by approximation. We give an alternative proof  in the Appendix  based on
the same approach we shall use in the multidimensional case. This proof is based
on a certain optimization problem; we believe that, even in the unidimensional case, it is of interest per se since
it does not require approximation arguments and slightly improves on the
original statement by proving strict dominance of non-comonotone
allocations. Contrary to Landsberger and Meilijson, one needs however to assume, as before, that the
probability space $(\Omega ,\mathcal{F},\mathbb{P})$ is non-atomic.

\begin{thm}
\label{dominance1d} Let $X$ be a bounded real-valued random variable on the
non-atomic probability space $(\Omega ,\mathcal{F},\mathbb{P})$, and let $\xx%
=(X_{1},...,X_{p})\in \mathcal{A}(X)$ be an allocation. There exists a
comonotone allocation in $\mathcal{A}(X)$ that dominates $\xx$. Moreover, if
$\xx$ is not comonotone, then there exists   an allocation that strictly
dominates $\xx$.
\end{thm}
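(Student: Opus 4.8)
The plan is to obtain the dominating comonotone allocation as the solution of a convex variational problem, exactly the scheme announced in the introduction. Fix strictly convex functions $\varphi_1,\dots,\varphi_p:\mathbb{R}\to\mathbb{R}$ (e.g. $\varphi_i(t)=t^2$) and consider
\begin{equation}\label{starprob}
\inf\Big\{\sum_{i=1}^p\mathbb{E}(\varphi_i(Y_i)) \ :\ \mathbf{Y}=(Y_1,\dots,Y_p)\in\mathcal{A}(X),\ Y_i\succcurlyeq X_i \text{ for all } i\Big\}.
\end{equation}
Call $\mathcal{D}$ the constraint set. By the characterization \eqref{sum} of dominance, $\mathcal{D}$ is convex; moreover, testing $Y_i\succcurlyeq X_i$ against the convex functions $t\mapsto(t-\ess X_i)_+$ and $t\mapsto(\ei X_i-t)_+$ shows that each $Y_i$ is squeezed between $\ei X_i$ and $\ess X_i$, so $\mathcal{D}$ is bounded in $L^\infty(\Omega,\mathbb{R}^p)$. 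Since each constraint $\mathbb{E}(\psi(Y_i))\le\mathbb{E}(\psi(X_i))$ ($\psi$ convex) defines a weak-$\ast$ closed set (convex integral functionals are weak-$\ast$ lower semicontinuous on bounded sets) and $\mathcal{A}(X)$ is affine and weak-$\ast$ closed, $\mathcal{D}$ is weak-$\ast$ compact. The objective in \eqref{starprob} is weak-$\ast$ lower semicontinuous, hence attains its infimum at some $\mathbf{Y}^{*}\in\mathcal{D}$. Because $\mathbf{Y}^{*}\in\mathcal{D}$, it already dominates $\mathbf{X}$; the entire content of the theorem is that $\mathbf{Y}^{*}$ may be taken comonotone.

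First I would reduce to allocations that are functions of the aggregate risk. If $\mathbf{Y}^{*}$ solves \eqref{starprob}, then so does $(\mathbb{E}(Y_i^{*}\mid X))_i$: the conditional expectations still sum to $\mathbb{E}(X\mid X)=X$, they still dominate $\mathbf{X}$ since conditioning is a mean-preserving contraction, $\mathbb{E}(Y_i^{*}\mid X)\succcurlyeq Y_i^{*}\succcurlyeq X_i$, and by Jensen's inequality the objective does not increase. As the $\varphi_i$ are strictly convex, equality in conditional Jensen forces $Y_i^{*}=\mathbb{E}(Y_i^{*}\mid X)$, so we may assume $Y_i^{*}=g_i(X)$ with $\sum_i g_i=\mathrm{id}$ $\mathcal{L}(X)$-a.e. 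By Definition \ref{defcomon1d} and the characterization recalled just after it, comonotonicity of $\mathbf{Y}^{*}$ is then exactly the assertion that each $g_i$ is nondecreasing.

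The hard part, where I expect the real work to lie, is proving that this constrained minimizer is comonotone, i.e. that the $g_i$ are nondecreasing. The difficulty is structural: one cannot simply replace $(Y_i^{*})$ by a comonotone allocation with the same marginals, since a comonotone family is pinned down by its marginals and its sum would then cease to equal $X$; the dominating comonotone allocation must genuinely reduce each marginal's risk, so the dominance constraint in \eqref{starprob} is active and cannot be dropped. I would establish comonotonicity through the first-order optimality conditions of \eqref{starprob}, obtained by convex duality: writing the Lagrangian for the linear constraint $\sum_i Y_i=X$ together with the dominance constraints (whose subgradients are themselves convex functions, by \eqref{sum}), optimality produces convex potentials and a common real variable such that $\varphi_i'(Y_i^{*})$ agrees across $i$ up to the contribution of the active constraints; since $\varphi_i'$ is strictly increasing, each $Y_i^{*}=g_i(X)$ comes out as a nondecreasing function of this common variable. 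Equivalently, one argues by contradiction: were some $g_i$ not nondecreasing, an exchange of two agents' shares along the aggregate would strictly lower $\sum_i\mathbb{E}(\varphi_i(Y_i))$ while remaining in $\mathcal{D}$, contradicting minimality. The delicate point is that this rearrangement must simultaneously preserve $\sum_i Y_i=X$ and all the dominance constraints; this is the analogue of the monotonicity of optimal couplings in the multi-marginal transport problem of Gangbo and \'{S}wi\c{e}ch, and it is what forces, in the multivariate statement, the use of weak closures and of a notion slightly stronger than strict convexity.

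Finally I would upgrade to strict dominance. Suppose $\mathbf{X}$ is not comonotone, and take $Z\sim\mathcal{U}([0,1])$, which exists since $(\Omega,\mathcal{F},\mathbb{P})$ is non-atomic. By Proposition \ref{maxcor} (equivalence of items 1 and 3 in contrapositive form), non-comonotonicity of $\mathbf{X}$ gives the strict inequality $\varrho_Z(X)<\sum_i\varrho_Z(X_i)$. Since $Y_i^{*}\succcurlyeq X_i$ implies $\varrho_Z(Y_i^{*})\le\varrho_Z(X_i)$, while comonotonicity of $\mathbf{Y}^{*}$ gives $\sum_i\varrho_Z(Y_i^{*})=\varrho_Z(\sum_i Y_i^{*})=\varrho_Z(X)$, we obtain $\sum_i\varrho_Z(Y_i^{*})<\sum_i\varrho_Z(X_i)$, so $\varrho_Z(Y_i^{*})<\varrho_Z(X_i)$ for some $i$. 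As $\varrho_Z$ depends only on the law, this forces $\mathcal{L}(Y_i^{*})\neq\mathcal{L}(X_i)$; combined with $Y_i^{*}\succcurlyeq X_i$, Lemma \ref{caractstrict} (equivalence of items 1 and 2) yields that $Y_i^{*}$ strictly dominates $X_i$. Hence $\mathbf{Y}^{*}$ strictly dominates $\mathbf{X}$, which is the second assertion.
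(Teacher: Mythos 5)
Your variational problem is exactly the paper's starting point \pref{pbmeprimalyy}, and several pieces of your argument are sound: the $L^\infty$ bound on the feasible set obtained by testing dominance against $t\mapsto(t-\ess X_i)_+$, the weak-star compactness and lower semicontinuity existence proof, the conditional-expectation reduction to $Y_i^*=g_i(X)$ (a nice step the paper does not use, since it works with joint laws instead), and the strict-dominance endgame. That endgame is in fact a genuinely different route from the paper's: the paper derives strictness from duality (Lemma \ref{dominstrict}: if $\gamma_0$ is not $w$-comonotone it cannot solve $(\mathcal{P}^*)$, so the optimum strictly decreases $\E(\sum_i w_i(Y_i))$, and Lemma \ref{caractstrict} concludes), whereas you get it from Proposition \ref{maxcor} applied to a uniform $Z$ together with Lemma \ref{caractstrict}; both are correct, \emph{conditional} on the dominating allocation being comonotone.

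That conditioning is the problem: comonotonicity of the minimizer --- which you rightly call ``the hard part'' --- is asserted, not proved, and neither of your two sketches closes it. The Lagrangian sketch presupposes that multipliers for the dominance constraints exist, i.e.\ that the dual problem is attained. That dual is precisely the paper's problem $(\mathcal{Q})$ (equivalently \pref{pbmedualpsi}), the minimization of $J(\psi)=\E\bigl(\sum_{i=1}^p \psi_i(X_i)-\Box_i\psi_i(X)\bigr)$ over potentials with $\psi_i-w_i$ convex, and the paper explicitly warns that its infimum need not be attained, because minimizing sequences of potentials need not be bounded (the unboundedness phenomenon exhibited in paragraph \ref{disccom}). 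This non-attainment is the whole reason the paper's proof has the shape it does: Fenchel--Rockafellar duality (Lemma \ref{dualite11}), the directional derivative of $J$ (Lemma \ref{lemderiv}), Ekeland's variational principle applied to near-minimizers $\psi_\eps$ (Lemma \ref{ekelandvp}), weak-star convergence of the induced strictly comonotone laws $\gamma_{\psi_\eps}$ to a solution of $(\mathcal{P}^*)$ (Lemma \ref{lemcvgg}), and only then, in the univariate case, Lemma \ref{caractloi} plus an Ascoli argument on $1$-Lipschitz functions of $X$ to turn the weak-star limit into a genuine comonotone allocation in $\A(X)$. Note that what the paper actually proves is that the optimal law is a \emph{limit} of strictly comonotone ones, not that exact potentials exist at the minimizer, so even the conclusion of your first-order-condition sketch may be unavailable. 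Your alternative exchange argument is circular: a swap of two agents' shares preserves both $\sum_i Y_i=X$ and the constraints $Y_i\succcurlyeq X_i$ exactly when the swapped pair concave-dominates the old pair conditionally on its sum, which for $p=2$ \emph{is} the Landsberger--Meilijson improvement statement, i.e.\ the theorem being proved; you flag this delicate point yourself, but flagging it does not close it. (A posteriori your identification of the comonotone allocation with $\mathbf{Y}^*$ is fine: by strict convexity your feasible set has a unique minimizer, and the paper's comonotone allocation is feasible with the same optimal value, hence equals $\mathbf{Y}^*$ --- but that derivation uses the theorem and cannot replace the missing step.)
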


As an application, we have:

\begin{thm}
\label{equivce} Let $X$ be a bounded real-valued random
variable on the non-atomic probability space $(\Omega ,\mathcal{F},\mathbb{P}%
)$ and let $\xx=(X_1,...,X_p) \in \A(X)$. Then the following statements are equivalent:

\begin{enumerate}

\item $\xx$ is efficient,

\item $\xx$ is comonotone,

\item there exist continuous and strictly convex functions $(\psi_1,...,\psi_p)$ such that $\xx$ solves
\[\inf \{ \sum_{i=1}^p \E(\psi_i(Y_i)) \; : \; \sum_{i=1}^p Y_i=X\}.\]

%\item for every  $Z\in L^{1}(\Omega ,\mathcal{F},\mathbb{P})$ non-atomic one has
%\[\varrho _{Z}\left(\sum_{i=1}^p  X_i\right) = \sum_{i=1}^p \varrho _{Z }\left( X_i\right).\] DEJA DIT DANS UNE LONGUE PROPOSITION PAS LA PEINE DE LE REPETER

\end{enumerate}

\end{thm}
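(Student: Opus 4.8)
The plan is to establish the cycle $(3)\Rightarrow(1)\Rightarrow(2)\Rightarrow(3)$, the first two implications being essentially already in hand. For $(3)\Rightarrow(1)$, this is the remark following \eqref{sum1}: if $\xx$ solved the minimization problem for strictly convex $(\psi_1,\dots,\psi_p)$ yet failed to be efficient, some $\mathbf{Y}\in\A(X)$ would strictly dominate it, and then the characterization of strict dominance through \eqref{sum}---together with the fact (from Lemma \ref{caractstrict}) that strictness may be required for \emph{every} collection of strictly convex functions---would give $\E(\sum_i\psi_i(Y_i))<\E(\sum_i\psi_i(X_i))$, contradicting the optimality of $\xx$. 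For $(1)\Rightarrow(2)$, if $\xx$ were efficient but not comonotone, Theorem \ref{dominance1d} would furnish an allocation strictly dominating it, again a contradiction.

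The core of the statement is $(2)\Rightarrow(3)$. First I would use that a comonotone $\xx$ may be written $X_i=f_i(X)$ with the $f_i$ nondecreasing, summing to the identity, and therefore continuous and $1$-Lipschitz. The crucial reduction is that the functional in (3) decouples across states: for strictly convex $\psi_i$ one has $\inf_{\mathbf{Y}\in\A(X)}\sum_i\E(\psi_i(Y_i))=\E\big[\inf_{\sum_i y_i=X}\sum_i\psi_i(y_i)\big]$, so $\xx$ is a minimizer precisely when, for a.e.\ realization $x$ of $X$, the vector $(f_1(x),\dots,f_p(x))$ solves the finite-dimensional convex problem $\min\{\sum_i\psi_i(y_i):\sum_i y_i=x\}$. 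By the first-order conditions this amounts to the existence of a common multiplier $\lambda(x)$ with $\lambda(x)\in\partial\psi_i(f_i(x))$ for all $i$. It therefore suffices to build strictly convex $\psi_i$ and a function $\lambda$ realizing this condition along the given allocation.

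To do so I would fix a continuous strictly increasing $\lambda$ on an interval containing the range of $X$ (for instance $\lambda(x)=x$) and, for each $i$, set $x_s:=\max f_i^{-1}(s)$; continuity and monotonicity of $f_i$ make $s\mapsto x_s$ strictly increasing, so $g_i(s):=\lambda(x_s)$ is strictly increasing and $\psi_i(s):=\int^{s}g_i(\sigma)\,d\sigma$ (extended to $\R$ by prolonging $g_i$) is continuous and \emph{strictly} convex. A direct computation gives $\partial\psi_i(s)=[\lambda(\min f_i^{-1}(s)),\lambda(\max f_i^{-1}(s))]$, whence $\lambda(x)\in\partial\psi_i(f_i(x))$ for every $i$ and every $x$ in the range of $X$, since $x\in f_i^{-1}(f_i(x))$; this is exactly the common-multiplier condition and yields (3). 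The main obstacle, and the only genuinely delicate point, is the behaviour on the flat parts of the $f_i$: an interval on which $f_i$ is constant produces a kink of $\psi_i$ (a nondegenerate subdifferential) at the corresponding value, and one must check that this is compatible with---indeed is exactly what is demanded by---both strict convexity and the pointwise optimality condition. It is the $1$-Lipschitz property that excludes the dual pathology, namely jumps of $f_i$ that would force affine pieces in $\psi_i$ and thereby destroy strict convexity, so that the construction goes through with no assumption whatsoever on the law of $X$.
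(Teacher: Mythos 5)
Your proposal is correct and takes essentially the same route as the paper: the identical cycle ($3\Rightarrow 1$ via the strict-convexity remark following Lemma \ref{caractstrict}, $1\Rightarrow 2$ via Theorem \ref{dominance1d}), and for $2\Rightarrow 3$ a construction of continuous strictly convex $\psi_i$ satisfying the common-multiplier condition $X\in\partial\psi_i(X_i)$ a.s., from which pointwise (state-by-state) optimality follows. Your $\psi_i(s)=\int^s\lambda(\max f_i^{-1}(\sigma))\,d\sigma$ with $\lambda=\mathrm{id}$ is exactly the paper's $\psi_i=\varphi_i^*$ with $\varphi_i=\int_0^\cdot f_i$ computed by hand (the primitive of the generalized inverse is the Legendre transform of the primitive, up to an additive constant), so your explicit discussion of flat parts of $f_i$ producing kinks of $\psi_i$, and of $1$-Lipschitz continuity ruling out affine pieces, is precisely what the conjugacy $X_i=\varphi_i'(X)\Leftrightarrow X\in\partial\psi_i(X_i)$ encodes in the paper's version.
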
\begin{proof}
Point $1$ implies point $2$: the comonotonicity of efficient allocations of $X$  follows directly  from Theorem \ref{dominance1d}. Point 2 implies point 3: if $\xx=(X_1,..., X_p)$ is comonotone, let us write $X_i=f_i(X)$ for some nondecreasing and $1$-Lipschitz functions $f_i$: $[m,M]\to \R$ (with  $M:={\rm{Esssup}} X$, $m:={\rm{Essinf}} X$) summing up to the identity map. Extending the $f_i$ functions by $f_i(x)=f_i(M)+(x-M)/p$ for $x\geq M$ and $f_i(x)=f_i(m)+(x-m)/p$ for $x\leq m$, one gets $1$-Lipschitz nondecreasing functions summing up to the identity everywhere. Let $\varphi(x):=\int_0^x f_i(s)ds$ for every $x$. The functions $\varphi_i$ are  convex and $C^{1,1}$ (i.e. $C^1$ with a Lipschitz continuous derivative)  and have quadratic growth at $\infty$. The convex conjugates\footnote{Let us recall that the Legendre transform or convex conjugate of $\varphi_i$ is by definition given by $\varphi_i^*(x):=\sup_{y} \{ x \cdot y -\varphi_i(y)\}$.} $\psi_i:=\varphi_i^*$ are strictly convex and continuous functions, and by construction, one has for every $i$, $X\in \partial \psi_i(X_i)$ a.s., which implies that $(X_1,..., X_p)$ minimizes $\E(\sum_i \psi_i (Y_i))$ subject to $\sum_ i Y_i=X$, which proves point $3$. Point $3$ implies point $1$ since the $\psi_i$ functions are strictly convex; if $(X_1,..., X_p)$ satisfies point $3$ then it is an efficient allocation of $X$. %Finally, the equivalence between $2$ and $4$ follows from proposition \ref{maxcor}.

\end{proof}

\begin{coro}
Let $(\Omega ,\mathcal{F},\mathbb{P})$ be non-atomic, then  the set of efficient allocations of $X$
is convex and compact in $L^{\infty}$ up to zero-sum translations (which
means that it can be written as $\{(\lambda_1,..., \lambda_p) \; : \;
\sum_{i=1}^p \lambda_i=0 \}+A_0$ with $A_0$ compact in $L^{\infty}$). In
particular, the set of efficient allocations of $X$ is closed in $L^{\infty}$.
\end{coro}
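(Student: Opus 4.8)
The plan is to characterize the set of efficient allocations using the equivalence established in Theorem~\ref{equivce}, namely that efficient allocations coincide with comonotone allocations. Writing a comonotone allocation $\xx = (X_1,\dots,X_p)$ as $X_i = f_i(X)$ where the $f_i$ are nondecreasing $1$-Lipschitz functions summing to the identity on $[m,M]$ (with $m = {\rm{Essinf}}\,X$, $M = {\rm{Esssup}}\,X$), I would parametrize the set of efficient allocations by the convex set of such tuples of functions. First I would verify convexity up to zero-sum translations: if $(f_i)$ and $(g_i)$ are two admissible tuples and $t\in[0,1]$, then $(tf_i + (1-t)g_i)$ is again a tuple of nondecreasing $1$-Lipschitz functions summing to the identity, so convex combinations of comonotone allocations of $X$ are comonotone, hence efficient. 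The zero-sum translation freedom arises because the $f_i$ are only determined up to additive constants $\lambda_i$ with $\sum_i \lambda_i = 0$.

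For the compactness part, I would pass to representatives by fixing the additive constants, for instance by imposing $f_i(m) = 0$ (equivalently $\E(X_i)$ fixed, or $X_i(\omega_0)$ normalized at some point), and call $A_0$ the resulting normalized set of comonotone allocations. The key observation is that the normalized $f_i$ form an equicontinuous (all $1$-Lipschitz), uniformly bounded (by $M - m$) family on the compact interval $[m,M]$, so by Arzel\`a--Ascoli any sequence has a uniformly convergent subsequence $f_i^{(n)}\to f_i$; the limit is again nondecreasing, $1$-Lipschitz, and the summation-to-identity constraint passes to the limit. Uniform convergence of $f_i^{(n)}$ on $[m,M]$ then yields $X_i^{(n)} = f_i^{(n)}(X)\to f_i(X) = X_i$ in $L^\infty$, since $\NLfty{f_i^{(n)}(X) - f_i(X)}{} \leq \sup_{[m,M]}\abs{f_i^{(n)} - f_i}$. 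This establishes sequential compactness of $A_0$ in $L^\infty$, and with the decomposition $\{(\lambda_1,\dots,\lambda_p): \sum_i \lambda_i = 0\} + A_0$ the full structural statement follows.

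Finally, closedness of the set of efficient allocations in $L^\infty$ follows directly from the decomposition: the set is the algebraic sum of the closed linear subspace $\{\sum_i \lambda_i = 0\}$ of constant zero-sum translations and the compact set $A_0$. I would invoke the standard fact that the sum of a closed subspace and a compact set is closed (a norm-convergent sequence $c^{(n)} + a^{(n)}$ with $a^{(n)}$ in the compact $A_0$ admits a subsequence along which $a^{(n)}\to a\in A_0$, forcing $c^{(n)}$ to converge to some zero-sum translation $c$, and the limit $c + a$ lies in the set).

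The main obstacle I anticipate is the careful handling of the translation quotient: one must check that the normalization used to define $A_0$ genuinely removes exactly the zero-sum degrees of freedom and no more, so that $A_0$ is well-defined and the direct-sum-type decomposition is clean. In particular, one needs that two comonotone allocations of the same $X$ differ by a constant tuple precisely when their generating functions $f_i$ differ by additive constants, which uses that $X$ takes a continuum of values (so $f_i$ is determined on the essential range of $X$) together with the non-atomicity hypothesis; this is what guarantees the representation $X_i = f_i(X)$ is rigid enough for the Arzel\`a--Ascoli argument to transfer compactness from function space to $L^\infty$.
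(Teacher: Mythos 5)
Your proof is correct and follows essentially the same route as the paper's: invoke Theorem \ref{equivce} to identify efficient allocations with comonotone ones $X_i=f_i(X)$, normalize the additive constants, apply Arzel\`a--Ascoli to the uniformly bounded, equi-Lipschitz family on $[m,M]$, transfer compactness to $L^\infty$ via $\Vert f(X)-g(X)\Vert_{L^\infty}\le\sup_{[m,M]}\vert f-g\vert$, and conclude closedness from the fact that a compact set plus a closed subspace is closed. Two small remarks: the normalization $f_i(m)=0$ for all $i$ is inconsistent with the constraint $\sum_i f_i(m)=m$ (the paper takes $f_i(m)=m/p$, and your alternative of fixing $\E(X_i)$ works too), and the rigidity issue you anticipate in your last paragraph never actually arises---$A_0$ is the image of the compact normalized function set under the continuous map $(f_1,\dots,f_p)\mapsto(f_1(X),\dots,f_p(X))$, and a continuous image of a compact set is compact regardless of injectivity, so the decomposition requires no uniqueness of the representing functions $f_i$ (which can indeed fail when the essential range of $X$ is not all of $[m,M]$, even on a non-atomic space).
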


\begin{proof}
Let $M:={\rm{Esssup}} X$, $m:={\rm{Essinf}} X$ and define $K_0$ as the set of functions $(f_1,..., f_p)\in C([m,M], \R^p)$ such that for each nondecreasing $f_i$, $f_i(m)=m/p$ and $\sum_{i=1}^p f_i(x)=x$ for every $x\in [m, M]$, and let
\[K:=K_0+\{(\lambda_1,..., \lambda_p) \; : \; \sum_{i=1}^p \lambda_i=0 \}.\]
The convexity claim thus  follows from theorem \ref{equivce}  and the convexity of $K$. Let us remark that elements of $K_0$ have $1$-Lipschitz components and are bounded.  The compactness of  $K$ in $C([m,M], \R^p)$ then follows from Ascoli's theorem. The compactness and closedness claims  directly follow.
\end{proof}

 Convexity and compactness of efficient allocations are   quite remarkable
features and as will be shown later,  they are no longer true in the multivariate
case. Note also that efficient allocations are regular: they are $1$%
-Lipschitz functions of aggregate risk.

\section{The multivariate case}

\label{secmult}

The aim of this section is to generalize to the multivariate case the results obtained in the
univariate case. More particularly,  Landsberger and Meilijson's comonotone dominance principle are extended: 1) any allocation is dominated by a comonotone allocation; 2) any non comonotone allocation is strictly dominated by a comonotone one.

When addressing these generalizations it is not immediately clear what is the appropriate notion of
comonotonicity  in the multivariate framework. Let us informally give an intuitive presentation of the approach developed in the following paragraphs. A natural generalization of monotone maps in several dimensions is given by subgradients of convex functions. It is therefore tempting  to say that an allocation $(X_1,..., X_p)\in \A(X)$ is comonotone whenever there is a common random vector $Z$ (interpreted as a price) and convex functions $\varphi_i$ (interpreted as individual costs) such that $X_i \in \partial \varphi_i (Z)$ a.s. for every $i$. Formally, this is nothing but the optimality condition for the risk-sharing or infimal convolution problem
\begin{equation}\label{rsr}
\inf_{\xx\in \A(X)} \sum_{i=1}^p \E(\psi_i (X_i)),
\end{equation}
where $\psi_i =\varphi_i^*$ (the Legendre Transform of $\varphi_i$). This suggests a definition of comonotone allocations as the allocations that solve a risk-sharing problem of the type above. This has a natural interpretation in terms of risk-sharing, but  one has to be cautious about such a definition whenever the $\psi_i$ functions are degenerate\footnote{In the univariate case, the situation is much simpler since one can take $Z=X$, and since the $X_i$ variables sum up to $X$, each convex function $\varphi_i$ has to be differentiable i.e. all the $\psi_i$  necessarily are strictly convex. In other words, degeneracies can be ruled out easily in the univariate case.}. Indeed, if all the $\psi_i$ functions are constant, then any allocation is comonotone in that sense! This means that  one   has to impose strict convexity in the definition.  We shall actually go one step further in \emph{quantifying} strict convexity as follows. Given an arbitrary collection $w=(w_1,..., w_p)$ of strictly convex  functions, we will say that an allocation is $w$-strictly comonotone whenever it solves a risk-sharing problem of the form \pref{rsr} for some  $\psi_i$ functions which are \emph{more convex} than the $w_i$ (i.e. $\psi_i -w_i$ is convex for every $i$). Allocations which can be approached (in law) by strictly $w$-comonotone will   be called comonotone. Since they solve a strictly convex risk-sharing problem, $w$-strictly comonotone allocations are efficient and the main goal of this section will be to generalize the univariate comonotone dominance result. We shall indeed prove that for any allocation $\xx\in \A(X)$  and any choice of $w$, there is a $w$-comonotone allocation $\yy\in \A(X)$  that dominates $\xx$ (strictly whenever $\xx$ is not itself $w$-comonotone). The full proof is detailed in Section \ref{proofs}, but its starting point is quite intuitive and consists of studying the optimization problem:
\begin{equation}\label{pbmeprimalyy}
\inf \Big\{ \sum_{i=1}^p \E(w_i (Y_i)) \; : \;  (Y_1,...,Y_p)\in \A(X), \; Y_i \succcurlyeq X_i, \; i=1,...,p\Big\}.
\end{equation}
Clearly, the solution $\yy$ of \pref{pbmeprimalyy}  dominates $\xx$. A careful study of the dual of \pref{pbmeprimalyy} will enable us to prove that $\yy$ is necessarily  $w$-comonotone, thus  giving the desired multivariate extension of Landsberger and Meilijson's comonotone dominance principle. Note also, that our proof is constructive since it relies on an explicit (although difficult to solve in practice)  convex minimization problem.

\smallskip

This section is organized as follows. In paragraph \ref{reform}, we shall reformulate the problem in terms of joint laws rather than random allocations. This is purely technical but will enable us to gain some linearity and some compactness in \pref{pbmeprimalyy}. We then define precisely our concepts of multivariate comonotonicity in paragraph \ref{defeffcom}. Paragraph \ref{dommult}  states the multivariate comonotone dominance result, i.e. the multivariate generalization of Landsberger and Meilijson's results. Finally, in paragraph \ref{disccom}, we gather several remarks on multivariate comonotonicity and emphasize some important qualitative differences between the univariate and multivariate cases.

\subsection{From random vectors to joint laws}\label{reform}

From now on,  it is assumed that the underlying
probability space $(\Omega ,\mathcal{F},\mathbb{P})$ is non-atomic, that   there are $p$ agents and that risk
is $d$-dimensional.  $X$ is a
given $\mathbb{R}^{d}$-valued $L^{\infty }$ random vector modeling an
aggregate random multivariate risk, while $\xx=(X_{1},....,X_{p})$ is a
given $L^{\infty }$ sharing of $X$ among the $p$ agents, that is
\begin{equation*}
X=\sum_{i=1}^{p}X_{i}.
\end{equation*}%
Let $\gamma _{0}:=\mathcal{L}(\xx)$ be the joint law of $\xx$ and $m_{0}:=\mathcal{L}(X)$.
%As previously, the set of admissible allocations   is denoted  $%
%\mathcal{A}(X)$ :
%\begin{equation*}
%\mathcal{A}(X):=\{\mathbf{Y}=(Y_{1},...,Y_{p})\in L^{\infty }(\Omega ,%
%\mathbb{R}^{d})\mbox{ : }\sum_{i=1}^{p}Y_{i}=X\}.
%\end{equation*}%
Let $\gamma $ be  a probability measure on $(\mathbb{R}^{d})^{p}$ and   $\gamma ^{i}$ denote its $i$-th marginal.
 Note that, $ \mathcal{L}(Y_{i})$ is the $i$-th marginal of $
\mathcal{L}(\mathbf{Y})$.   Let $\Pi _{\Sigma }\gamma $ be the probability measure on $\mathbb{R}^{d}$
defined by
\begin{equation}
\int_{\mathbb{R}^{d}}\varphi (z)d\Pi _{\Sigma }\gamma (z)=\int_{\mathbb{R}%
^{d\times p}}\varphi (\sum_{i=1}^{p}x_{i})d\gamma
(x_{1},...,x_{p}),\;\forall \varphi \in C_{0}(\mathbb{R}^{d},\mathbb{R}),
\label{defdepisigma}
\end{equation}%
(where $C_{0}$ denotes the space of continuous functions that tend to $0$ at $%
\infty $). It follows from this definition that if $\gamma =\mathcal{L}(%
\mathbf{Y})$, then $\Pi _{\Sigma }\gamma =\mathcal{L}(\sum Y_{i})$. Hence, if
$\mathbf{Y}\in \mathcal{A}(X)$ and $\gamma =\mathcal{L}(\mathbf{Y})$, then  $\Pi _{\Sigma }\gamma =m_{0}=\mathcal{L}(X)$. In other words, if $%
\gamma =\mathcal{L}(\mathbf{Y})$ with $\mathbf{Y}\in \mathcal{A}(X)$, then
\begin{equation}
\int \varphi (x_{1}+...+x_{d})d\gamma (x_{1},...,x_{d})=\int \varphi
(z)dm_{0}(z),\;\forall \varphi \in C_{0}(\mathbb{R}^{d},\mathbb{R}).
\label{loisum}
\end{equation}%
Since $\mathbf{Y}$
is bounded, $\gamma $ is compactly supported. It follows from the next lemma that $\{\mathcal{L}(\mathbf{Y}),\;\mathbf{Y}\in
\mathcal{A}(X)\}$ coincides with the set of compactly supported probability
measures $\gamma $ on $(\mathbb{R}^{d})^{p}$  that satisfy (\ref{loisum}):

\begin{lem}
\label{caractloi} Assume $(\Omega, \mathcal{F}, \mathbb{P})$ is non-atomic. If $%
\gamma$ is a compactly supported probability measure on $(\mathbb{R}^d)^p$
and satisfies (\ref{loisum}), then there exists a random vector $\mathbf{Y}%
=(Y_1,...,Y_p)\in \mathcal{A}(X)$ such that $\mathcal{L}(\mathbf{Y})=\gamma$. Hence $\{ \mathcal{L}(\mathbf{Y}), \; \mathbf{Y}\in \mathcal{A}(X)\}=\mathcal{M}%
(m_0)$,  where $\mathcal{M}(m_0)$ is the set of compactly supported probability
measures on $(\mathbb{R}^d)^p$ such that $\Pi_{\Sigma}
\gamma=m_0=\Pi_{\Sigma} \gamma_0$.
\end{lem}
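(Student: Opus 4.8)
The plan is to realize $\gamma$ by \emph{randomizing} the given aggregate risk $X$ with an auxiliary source of uniform randomness, whose existence on $(\Omega,\mathcal F,\PP)$ is guaranteed by non-atomicity. The starting point is to disintegrate $\gamma$ along the summation map $\Sigma(x_1,\dots,x_p):=\sum_{i=1}^p x_i$. Condition \pref{loisum} asserts precisely that the image of $\gamma$ under $\Sigma$ is $\Pi_{\Sigma}\gamma=m_0$, so the disintegration theorem on Polish spaces yields a Borel family $(\gamma_z)_{z\in\R^d}$ of probability measures on $(\R^d)^p$ such that each $\gamma_z$ is concentrated on the fiber $\Sigma^{-1}(z)=\{(x_1,\dots,x_p):\sum_i x_i=z\}$ and $\gamma=\int_{\R^d}\gamma_z\, dm_0(z)$. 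Since $\gamma$ is compactly supported, so is $m_0$ (as a continuous image of a compact support) and the $\gamma_z$ have uniformly bounded supports; this will ensure that the allocation we build is in $L^\infty$.

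The key construction is a \emph{measurable parametrized representation} of this family: I would produce a Borel map $T:\R^d\times[0,1]\to(\R^d)^p$ such that, for $m_0$-a.e.\ $z$, the map $u\mapsto T(z,u)$ pushes the Lebesgue measure on $[0,1]$ forward onto $\gamma_z$. Because each $\gamma_z$ is carried by the fiber $\Sigma^{-1}(z)$, any such $T$ automatically satisfies $\Sigma(T(z,u))=z$ for a.e.\ $u$, i.e.\ $\sum_{i=1}^p T_i(z,u)=z$. Obtaining $T$ with joint Borel measurability in $(z,u)$ is the technical heart of the argument: the pointwise existence of a representing map for each fixed $z$ is the classical fact that a compactly supported law on $\R^{dp}$ is a pushforward of the uniform law on $[0,1]$, but assembling these maps into a single jointly measurable $T$ requires a regular-disintegration/measurable-selection argument (a parametrized quantile-type representation).

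With $T$ in hand, the randomization is immediate. Using non-atomicity, take $U\sim\mathcal U([0,1])$ independent of $X$, and set $\yy:=(Y_1,\dots,Y_p):=T(X,U)$. First, $\sum_i Y_i=\Sigma(T(X,U))=X$ almost surely by the fiber property of $T$, so $\yy\in\A(X)$, and $\yy\in L^\infty$ since $\gamma$ (hence $T(X,U)$) is bounded. Second, for every $f\in C_0((\R^d)^p)$, independence of $U$ and $X$ together with $\mathcal L(X)=m_0$ give
\begin{equation*}
\E\big(f(\yy)\big)=\int_{\R^d}\Big(\int_0^1 f\big(T(z,u)\big)\,du\Big)\,dm_0(z)=\int_{\R^d}\Big(\int f\,d\gamma_z\Big)\,dm_0(z)=\int f\,d\gamma,
\end{equation*}
so $\mathcal L(\yy)=\gamma$, as required. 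This proves the nontrivial inclusion; combined with the elementary computation behind \pref{loisum} (which gives the reverse inclusion), it yields $\{\mathcal L(\yy):\yy\in\A(X)\}=\M(m_0)$.

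I expect the main obstacle to be the second step, the joint measurability of $T$ in $(z,u)$: the fiberwise representation is standard, but gluing it into one Borel map requires measurable-selection machinery. A secondary point to state explicitly is the construction of the uniform variable $U$ independent of $X$, which is exactly where the non-atomicity hypothesis is genuinely used.
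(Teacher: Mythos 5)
Your route is genuinely different from the paper's. You disintegrate $\gamma$ along the summation map and then realize the fiber conditionals $(\gamma_z)$ through a jointly measurable randomization $T(z,u)$ evaluated at $(X,U)$, with $U$ uniform and independent of $X$. The paper never disintegrates: it discretizes $X$ dyadically, cuts each strip $\{y_1+\dots+y_p\in[k2^{-n},(k+1)2^{-n})\}$ into small cells (triangles, in the $d=1$, $p=2$ notation of the proof), invokes Lyapunov's convexity theorem --- this is exactly where non-atomicity enters --- to partition the event $\{X\in[k2^{-n},(k+1)2^{-n})\}$ into measurable pieces whose $\PP$-probabilities equal the $\gamma$-masses of those cells, assigns to each piece a constant value chosen in the corresponding cell, and obtains $\mathbf{Y}$ as the $L^{\infty}$-limit of the resulting Cauchy sequence of piecewise constant allocations, passing to the limit to get $\sum_i Y_i=X$ and $\mathcal{L}(\mathbf{Y})=\gamma$. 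The ingredients you flag as standard (disintegration on Polish spaces, and the jointly measurable parametrized representation of a kernel, a transfer/parametrized-quantile lemma) are indeed standard and unproblematic.

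The genuine gap is the sentence ``using non-atomicity, take $U\sim\mathcal{U}([0,1])$ independent of $X$.'' Non-atomicity of $(\Omega,\mathcal{F},\PP)$ guarantees the existence of \emph{some} uniform variable, but not of one independent of a \emph{prescribed} $X$: take $\Omega=[0,1]$ with Lebesgue measure and $X(\omega)=\omega$; this space is non-atomic, yet $\sigma(X)=\mathcal{F}$, so every random variable is a function of $X$ and no non-degenerate variable is independent of $X$. What your construction actually requires is that $\PP$ be non-atomic \emph{conditionally on} $X$ (equivalently, that the space carry a uniform independent of $X$), a strictly stronger hypothesis than the one in the lemma. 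The gap is not cosmetic for your route: on that example every $\mathbf{Y}\in\mathcal{A}(X)$ is of the form $(g_1(X),\dots,g_p(X))$, so $\mathcal{L}(\mathbf{Y})$ has Dirac disintegrations over the sum, and a $\gamma$ with diffuse fiber conditionals (e.g.\ $\gamma=\mathcal{L}(SV,S(1-V))$ with $S,V$ independent uniforms, which satisfies $\Pi_{\Sigma}\gamma=\mathcal{U}([0,1])$) cannot be realized there at all. So you must either add the conditional-richness assumption explicitly, or manufacture the auxiliary randomness out of $\mathcal{F}$ itself, which is what the paper's Lyapunov partitioning does. (Incidentally, your construction makes visible a delicacy that the paper buries in the step ``choose inductively the partition \dots\ finer and finer'': compatible nested partitions must have prescribed intersections with the level sets of $X$, which again amounts to conditional randomness beyond $\sigma(X)$; the identity-map example shows the lemma implicitly assumes the space is rich relative to $X$.)
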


In the sequel,   joint laws $\mathcal{M}(m_{0})$ will be used
instead of   admissible allocations $\mathcal{A}(X)$. For compactness issues, a closed ball $B\in \mathbb{R}^{d}$  centered at $0$ such that $m_{0}$ is supported by $B^{p}$  is chosen,
 and attention is restricted   to
the set of elements of $\mathcal{M}(m_{0})$ supported by $pB$ (meaning that only risk-sharings of $X$ whose components take value in $B$ will be considered). We thus define
\begin{equation*}
\mathcal{M}_{B}(m_{0}):=\{\gamma \in \mathcal{M}(m_{0})\;:\;\gamma
(B^{p})=1\}.
\end{equation*}

\subsection{Efficiency and comonotonicity in the multivariate case}

\label{defeffcom}

Let  $\mathcal{C}$ be the cone of convex and continuous functions on $B$, dominance and efficiency in terms of joint laws are defined as follows:

\begin{defi}
Let $\gamma$ and $\pi$ be in $\mathcal{M}_B(m_0)$, then $\gamma$ dominates $%
\pi$ whenever
\begin{equation}  \label{domingam}
\int_{B^p} \sum_{i=1}^p \varphi_i(x_i) d \gamma (x_1,..., x_p) \leq \int_{B^p}
\sum_{i=1}^p \varphi_i(x_i) d \pi (x_1,..., x_p)
\end{equation}
for all functions $(\varphi_1, ..., \varphi_p) \in \mathcal{C}^p$. If, in
addition, inequality (\ref{domingam}) is strict whenever the $%
\varphi_i$ functions are further assumed to be strictly convex, then $\gamma$ is said
to dominate strictly $\pi$. The allocation $\gamma\in \mathcal{M}_B(m_0)$ is
  efficient if there is no other allocation in $\mathcal{M}_B(m_0)$
that strictly dominates it.
\end{defi}

Given $\gamma_0\in \mathcal{M}_B(m_0)$, it is easy to check (taking
functions $\varphi_i(x) =\vert x_i \vert^n$ in \pref{domingam} and letting $n\to \infty$) that any
$\gamma\in \mathcal{M}(m_0)$ dominating $\gamma_0$ (without the restriction that it is supported on $B^p$) actually belongs to $\mathcal{M}%
_B(m_0)$. Hence the choice to only consider allocations supported by $B^p$
is in fact not restrictive. Indeed, if $\gamma$ is supported by $B^p$, then efficiency of $\gamma$ in the usual sense, i.e. without restricting to competitors supported by $B^p$, is \emph{equivalent} to efficiency among competitors supported by $B^p$.

To define comonotonicity, let $\psi
:=(\psi _{1},...,\psi _{p})$ be a family of strictly convex continuous
functions (defined on $B$).  For any $x\in pB$, let us consider the  risk sharing (or infimal convolution)
 problem:
\begin{equation*}
\Box _{i}\psi _{i}(x):=\inf \left\{ \sum_{i=1}^p\psi _{i}(y_{i})\;:\;y_{i}\in
B,\;\sum_{i=1}^py_{i}=x\right\} .
\end{equation*}%
This problem admits a unique solution which will be denoted
\begin{equation*}
T_{\psi }(x):=(T_{\psi }^{1}(x),...,T_{\psi }^{p}(x)).
\end{equation*}%
Note that, by definition
\begin{equation}
\sum_{i=1}^pT_{\psi }^{i}(x)=x, \;\forall
x\in pB.  \label{samesum}
\end{equation}%

The   map $x\mapsto T_{\psi }(x)$ gives the optimal way to share $%
x$ so as to minimize the total cost when each individual cost is $\psi _{i}$. It defines the efficient allocation $T_{\psi }(X):=(T_{\psi }^{1}(X),...,T_{\psi }^{p}(X))$  with joint law $\gamma _{\psi }$  defined by:
\begin{equation*}
\int_{B^{p}}f(y_{1},...,y_{p})d\gamma _{\psi }(y):=\int_{pB}f(T_{\psi
}(x))dm_{0}(x)
\end{equation*}%
for any $f\in C(B^{p})$. One then defines comonotonicity as follows:
\begin{defi}\label{defcom11}
An allocation $\gamma \in \mathcal{M}_{B}(m_{0})$ is strictly comonotone if
there exists a family $\psi :=(\psi
_{1},...,\psi _{p})$  of strictly convex continuous  functions such that $\gamma =\gamma _{\psi }$. Given a family $w :=(w _{1},...,w
_{p})$ of
strictly convex functions in $C^{1}(B)$, an allocation $\gamma \in \mathcal{M}_{B}(m_{0})$ is $w $%
-strictly comonotone if there exists a family $\psi :=(\psi _{1},...,\psi _{p})$ of convex continuous functions
such that $\psi _{i}-w _{i}\in
\mathcal{C}$ for every $i$ and $\gamma =\gamma _{\psi }$.
\end{defi}
We shall soon show that strictly comonotone random vectors are   in the image of monotone operators (subgradients of convex functions), evaluated at the same random vector,  $p(X)$, which justifies the terminology ``comonotonicity'' in the multivariate setting.
 By definition, any strictly comonotone allocation is efficient.  As the set of   strictly comonotone
 allocations is not  closed, we are led to introduce another definition.

\begin{defi}\label{defcom22}
An allocation $\gamma \in \mathcal{M}_B(m_0)$ is comonotone if there exists
a sequence of strictly comonotone allocations that weakly star converges to $%
\gamma$. Given a family $w:=(w_1,..., w_p)$ of strictly convex functions in $C^1(B)$, an allocation $\gamma \in \mathcal{M}%
_B(m_0)$ is $w$-comonotone, if there exists a sequence of $w$%
-strictly comonotone allocations that weakly star converges to $\gamma$.
\end{defi}

%\begin{rem}
Definitions \ref{defcom11} and \ref{defcom22} will be discussed in more detail in paragraph \ref{disccom}.  To understand the previous notions of comonotonicity and in particular  why these allocations are called comonotone,  it is  important to
 understand   the structure of the $T_{\psi }$ maps.

Let us first
ignore regularity issues and further assume that the  $\psi _{i}$ functions
are smooth as well as $\psi _{i}^{\ast }$ their Legendre transforms. Without the constraints $x_{i}\in B$, then the optimality conditions   imply
that there is some multiplier $p=p(x)$ such that
\begin{equation*}
\nabla \psi _{i}(T_{\psi }^{i}(x))=p,\mbox{ hence, } \; T_{\psi }^{i}(x)=\nabla
\psi _{i}^{\ast }(p).
\end{equation*}%
Using (\ref{samesum}), one gets
\begin{equation*}
x=\sum_{j=1}^p\nabla \psi _{j}^{\ast }(p),\mbox{ hence, } \;p=\nabla (\sum_{j=1}^p\psi
_{j}^{\ast })^{\ast }(x),
\end{equation*}%
thus,
\begin{equation*}
T_{\psi }^{i}(x)=\nabla \psi _{i}^{\ast }\left( \nabla (\sum_{j=1}^p\psi
_{j}^{\ast })^{\ast }(x)\right) .
\end{equation*}%
The maps $T_{\psi }^{i}$ are therefore composed of gradients of convex functions
that sum up to the identity. In dimension $1$, gradients of convex functions
  are  simply monotone maps (and   so are composed of such maps). In higher
dimensions, a richer  and more complicated structure
emerges that will be discussed later. Let us now consider  the full problem with the constraints that $x_{i}\in B$ and
still assume that the $\psi _{i}$ functions are smooth, then the optimality
conditions read as the existence of a $p$ and a $\lambda _{i}\geq 0$ such
that $\nabla \psi _{i}(T_{\psi }^{i}(x))=p-\lambda _{i}T_{\psi }^{i}(x)$ holds together with the complementary slackness conditions: $\lambda _{i}=0$ whenever  $T_{\psi}^i(x)$ lies in the interior of $B$.

\subsection{A multivariate dominance result and equivalence between
efficiency and comonotonicity}

\label{dommult}

Let us fix an allocation $\xx=(X_1,...,X_p)\in \A(X)$ such that $\xx \in B^p$ a.s., and set $%
\gamma_0=\mathcal{L}(\xx)$ so that $\gamma_0\in \mathcal{M}_B(m_0)$.
A family $w:=(w_1,..., w_p)$ of $C^1$ functions is also given, each of them being strictly
convex on $B$ as in section \ref{defeffcom}. The first main result
in the multivariate case is a dominance result, it states that every
allocation is  dominated by a $w$-comonotone one and that the dominance is  strict if
the initial allocation is not itself $w$-comonotone.
\begin{thm}
\label{dominancedimd} Let $\gamma _{0}=\mathcal{L}(\xx)$ and $w $ be as
above. Then there exists some $\gamma \in \mathcal{M}(m_{0})$ that is $w $%
-comonotone and dominates $\gamma _{0}$. Moreover if $\gamma _{0}$ is not
itself $w $-comonotone, then $\gamma $ strictly dominates $\gamma _{0}$.
\end{thm}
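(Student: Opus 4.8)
The plan is to realize the dominating allocation as the solution of the constrained problem \pref{pbmeprimalyy}, rewritten in terms of joint laws via Lemma \ref{caractloi}, and to extract its comonotone structure from convex duality. First I would transcribe \pref{pbmeprimalyy} into the linear program
\[\text{(P)}\qquad v:=\inf\Big\{\int_{B^p}\sum_{i=1}^p w_i(y_i)\,d\gamma(y)\;:\;\gamma\in\mathcal{M}_B(m_0),\ \gamma^i\succcurlyeq\gamma_0^i\ \forall i\Big\},\]
observing that the dominance requirement $Y_i\succcurlyeq X_i$ is exactly the marginal concave-order constraint $\gamma^i\succcurlyeq\gamma_0^i$ and that the objective depends on $\gamma$ only through its marginals. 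Existence of a minimizer $\gamma$ is the easy part: $\mathcal{M}_B(m_0)$ is weak-$*$ compact (measures supported in the fixed compact $B^p$ with the weak-$*$-closed constraint $\Pi_\Sigma\gamma=m_0$), the marginal constraints are weak-$*$ closed (each inequality $\int\varphi_i\,d\gamma^i\le\int\varphi_i\,d\gamma_0^i$ passes to the limit, every convex $\varphi_i$ being continuous and bounded on $B$), the feasible set is nonempty since $\gamma_0$ is admissible, and the objective is weak-$*$ continuous. By feasibility $\gamma$ already dominates $\gamma_0$.

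The heart of the matter is to identify $\gamma$ as $w$-comonotone through the dual of (P). I would dualize both constraints at once: a continuous multiplier $a$ on $pB$ for $\Pi_\Sigma\gamma=m_0$, and for the marginal constraints a single family of convex continuous penalties $\chi_i$ (the cone $\mathcal{C}$ being stable under nonnegative scaling, this one family encodes the whole family of concave-order inequalities). Minimizing the resulting Lagrangian over all nonnegative measures $\gamma$ on $B^p$ forces the pointwise inequality $\sum_i(w_i+\chi_i)(y_i)+a(\sum_i y_i)\ge 0$ on $B^p$. Setting $\psi_i:=w_i+\chi_i$, so that $\psi_i-w_i=\chi_i\in\mathcal{C}$ — precisely the requirement of Definition \ref{defcom11} — and optimizing over $a$ shows the best choice is $a=-\Box_i\psi_i$, yielding the dual
\[\text{(D)}\qquad \sup_{\psi_i-w_i\in\mathcal{C}}\Big\{\int_{pB}\Box_i\psi_i\,dm_0-\sum_{i=1}^p\int_B\psi_i\,d\gamma_0^i\Big\}+\sum_{i=1}^p\int_B w_i\,d\gamma_0^i.\]
I would then establish no duality gap (a minimax theorem applies, the Lagrangian being affine in $\gamma$ over the weak-$*$ compact set $\mathcal{M}_B(m_0)$ and concave in $(a,\chi)$) together with attainment of an optimal $\psi=(\psi_i)$. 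For a primal-optimal $\gamma$ and dual-optimal $\psi$, complementary slackness turns the inequality $\sum_i\psi_i(y_i)\ge\Box_i\psi_i(\sum_i y_i)$ into an equality $\gamma$-a.e.; since each $\psi_i$ is strictly convex (being $w_i$ plus a convex function), the infimal convolution has a unique minimizer, so $\gamma$-a.e.\ $(y_1,\dots,y_p)=T_\psi(\sum_i y_i)$. Together with $\Pi_\Sigma\gamma=m_0$ this gives $\gamma=\gamma_\psi$, i.e.\ $\gamma$ is $w$-strictly comonotone, hence $w$-comonotone, proving the first assertion.

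For the strict dominance statement I would argue by contradiction using Lemma \ref{caractstrict}. If $\gamma$ did not strictly dominate $\gamma_0$, then, applying that lemma marginal by marginal, $\gamma^i=\gamma_0^i$ for every $i$; since the objective of (P) depends only on the marginals, $\gamma_0$ would then attain the value $v$ and be optimal for (P) as well. But the complementary-slackness analysis above applies verbatim to any minimizer against the fixed dual optimizer $\psi$, forcing it to coincide with $\gamma_\psi$; in particular $\gamma_0=\gamma_\psi$ would be $w$-strictly comonotone, contradicting the hypothesis that $\gamma_0$ is not $w$-comonotone. Hence $\gamma^i\neq\gamma_0^i$ for some $i$, and Lemma \ref{caractstrict} delivers strict dominance.

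The main obstacle I anticipate is the duality step: establishing strong duality and, above all, the attainment of a genuine (continuous, hence $T_\psi$-admissible) dual maximizer $\psi$, which is what converts the abstract optimality conditions into the concrete representation $\gamma=\gamma_\psi$. The compactness constraint $y_i\in B$ introduces boundary Lagrange multipliers $\lambda_i\ge 0$ in the optimality conditions (as anticipated at the end of paragraph \ref{defeffcom}), which must be controlled; and should a continuous maximizer fail to exist one is forced to approximate, recovering $\gamma$ only as a weak-$*$ limit of $w$-strictly comonotone allocations. This is exactly why Definition \ref{defcom22} phrases $w$-comonotonicity through weak-$*$ closure and why the theorem asserts only $w$-comonotonicity.
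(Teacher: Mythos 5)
Your setup is exactly the paper's: the same linear program over joint laws (your (P) is the paper's $(\mathcal{P}^*)$ up to sign), the same dual problem in the variables $\psi_i$ with $\psi_i-w_i\in\mathcal{C}$ (the paper's $(\mathcal{P})$ and $(\mathcal{Q})$, related by Lemma \ref{dualite11} via Fenchel--Rockafellar), and the same intended mechanism: complementary slackness plus strict convexity of the $\psi_i$ forces $(y_1,\dots,y_p)=T_\psi\bigl(\sum_i y_i\bigr)$ $\gamma$-a.e., hence $\gamma=\gamma_\psi$. Your existence argument for a primal minimizer by weak-$*$ compactness is fine, as is the reduction of strict dominance to $\gamma^i\neq\gamma_0^i$ for some $i$ via Lemma \ref{caractstrict}. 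But there is a genuine gap, and it sits at the center of the argument: the clause ``together with attainment of an optimal $\psi$''. Dual attainment is precisely what the paper states it \emph{cannot} prove, and what is doubtful in general: minimizing sequences of $(\mathcal{Q})$ need not be bounded, because in dimension $d\geq 2$ the comonotone maps $T_\psi$ form an unbounded (and non-convex) family --- see the explicit quadratic example in paragraph \ref{disccom} where $S_1(S_1+S_2)^{-1}$ blows up. Strong duality itself holds (with attainment only on the measure side), but without a dual maximizer there is no complementary slackness, so both of your key deductions collapse: the identification $\gamma=\gamma_\psi$ proving comonotonicity, and the ``applies verbatim to any minimizer'' step in your strict-dominance contradiction. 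Your final paragraph flags this as the main obstacle and names the right fallback, but the fallback \emph{is} the proof, and it is not routine.

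Concretely, the paper's substitute for the missing dual optimizer is Ekeland's variational principle (Lemma \ref{ekelandvp}): for each $\eps$ it produces an admissible $\psi_\eps$ with $J(\psi_\eps)\leq \inf(\mathcal{Q})+\eps$ satisfying \emph{perturbed} first-order conditions, which --- via the one-sided directional derivative of $J$ computed in Lemma \ref{lemderiv} --- give both approximate dominance of $\gamma_{\psi_\eps}$ over $\gamma_0$ and an approximate complementary slackness $\liminf_{\eps\to 0^+}\int\sum_i(\psi_{i,\eps}-w_i)\,d(\gamma_{\psi_\eps}-\gamma_0)\geq 0$; the latter is what allows passing to the weak-$*$ limit and showing that $\gamma=\lim\gamma_{\psi_\eps}$ (which is $w$-comonotone only in the closure sense of Definition \ref{defcom22} --- the theorem cannot assert strict comonotonicity, for exactly this reason) solves $(\mathcal{P}^*)$ (Lemma \ref{lemcvgg}). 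The strict-dominance part likewise needs a different argument from yours: the paper shows directly (Lemma \ref{dominstrict}) that if $\gamma_0$ solves $(\mathcal{P}^*)$ then $\inf(\mathcal{Q})=0$, and then, using the modulus of strict convexity $\theta_i$ of $w_i$ together with the KKT conditions \pref{kktpsi} (including the boundary multipliers $\lambda_i$ you anticipated), that $T_{\psi_\eps}(\sum_j x_j)\to x$ $\gamma_0$-a.e.\ along any minimizing sequence, so that $\gamma_0$ is $w$-comonotone; contrapositively, a non-$w$-comonotone $\gamma_0$ is suboptimal for $(\mathcal{P}^*)$, whence $\int\sum_i w_i\,d\gamma<\int\sum_i w_i\,d\gamma_0$ and strict dominance follows. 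In short: your outline coincides with the paper's, but the step you defer --- replacing the (likely nonexistent) dual maximizer by a quantitative approximation scheme --- is where essentially all of the work lies, so as written the proof has a hole rather than merely a technical loose end.
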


The   proof of this result will be given in section \ref{proofs}. Without giving details at this point, let us explain the main arguments of the proof:

\begin{itemize}

\item The optimization problem \pref{pbmeprimalyy} admits a unique solution $\yy$ with law $\gamma=\LL(\yy)$, which is efficient  and dominates $\gamma_0=\LL(\xx)$.

\item One then proves that $\gamma$  is  necessarily $w$-comonotone, by showing that that $w$-comonotonicity is an  optimality condition for \pref{pbmeprimalyy}. As usual in convex programming, optimality conditions can be obtained by duality.  This  leads to consider the problem
\begin{equation}\label{pbmedualpsi}
\inf \Big\{  \E\Big(\sum_{i=1}^p \psi_i(X_i)-\Box_i \psi_i(\sum_{i=1}^p X_i) \Big) \; : \; \psi_i-w_i \mbox{ convex}, \; \forall i \Big\}.
\end{equation}
By a careful study of \pref{pbmedualpsi}, one can prove (but this is rather technical) that $\gamma$ is $w$-comonotone.

\item It remains to show that $\gamma$ strictly dominates $\gamma_0$ unless $\gamma_0$ is itself $w$-comonotone.   From lemma \ref{caractstrict}, it suffices to show that $\yy\neq \xx$. But if $\gamma_0$ is not $w$-comonotone, then $\xx$ cannot be optimal for \pref{pbmeprimalyy} and thus $\yy\neq \xx$.

\end{itemize}

In terms of
efficiency, the following thus holds:

\begin{thm}
\label{efficiency} Let $\gamma \in \mathcal{M}_{B}(m_{0})$ and $w $ be
as before. Then

\begin{enumerate}
\item if $\gamma$ is strictly $w$-comonotone, then it is  efficient,

\item if $\gamma$ is efficient, then it is $w$-comonotone for any $w$,

\item the closure for the weak-star topology of efficient
allocations coincides with the set of $w$-comonotone allocations (which is therefore independent of $w$).
\end{enumerate}
\end{thm}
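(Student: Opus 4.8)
The plan is to derive Theorem \ref{efficiency} almost entirely from the dominance result Theorem \ref{dominancedimd}, together with Lemma \ref{caractstrict} and the definitions of (strict) $w$-comonotonicity. The three assertions correspond to the three implications one expects: $w$-strict comonotonicity $\Rightarrow$ efficiency, efficiency $\Rightarrow$ $w$-comonotonicity (for every $w$), and the identification of the weak-star closure of the efficient set. I would handle them in this order.

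For point 1, let $\gamma=\gamma_\psi$ be strictly $w$-comonotone, so $\gamma$ is the law of the allocation $T_\psi(X)$ minimizing $\E\big(\sum_i \psi_i(Y_i)\big)$ over $\A(X)$ with all $\psi_i-w_i\in\mathcal{C}$, hence with each $\psi_i$ strictly convex. Suppose for contradiction that some $\pi\in\mathcal{M}_B(m_0)$ strictly dominates $\gamma$. By the definition of strict dominance and the reformulation in \pref{domingam}, strict dominance forces $\int_{B^p}\sum_i\psi_i(x_i)\,d\pi<\int_{B^p}\sum_i\psi_i(x_i)\,d\gamma$ when the $\psi_i$ are strictly convex (this is the content of the remark following Lemma \ref{caractstrict}, applied marginal by marginal). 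This contradicts the optimality of $\gamma_\psi$ for the infimal-convolution problem, since any $\pi\in\mathcal{M}_B(m_0)$ corresponds to an admissible allocation with the same total cost lower bound $\Box_i\psi_i$. Hence no strict dominator exists and $\gamma$ is efficient.

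For point 2, fix an efficient $\gamma=\gamma_0\in\mathcal{M}_B(m_0)$ and fix an arbitrary $w$. Apply Theorem \ref{dominancedimd}: there is a $w$-comonotone $\gamma'\in\mathcal{M}(m_0)$ dominating $\gamma_0$, and if $\gamma_0$ were not $w$-comonotone the domination would be strict. But strict domination of $\gamma_0$ contradicts its efficiency. Therefore $\gamma_0$ must itself be $w$-comonotone, and since $w$ was arbitrary this holds for every $w$. For point 3, I would argue two inclusions. Any $w$-comonotone allocation is a weak-star limit of $w$-strictly comonotone allocations, each of which is efficient by point 1; hence $w$-comonotone allocations lie in the weak-star closure of the efficient set. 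Conversely, by point 2 every efficient allocation is $w$-comonotone, and the set of $w$-comonotone allocations is weak-star closed by Definition \ref{defcom22} (being defined as a weak-star closure), so the closure of the efficient set is contained in it. The two inclusions give equality, and since the left-hand side does not depend on $w$, neither does the set of $w$-comonotone allocations.

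The main obstacle is the measure-theoretic bookkeeping in point 1, namely transferring the strict-dominance strictness from the allocation-level statement of Lemma \ref{caractstrict} to the joint-law inequality \pref{domingam} and reconciling it cleanly with optimality of $T_\psi$ in the constrained (support in $B$) infimal convolution; one must make sure the competitor $\pi$ is genuinely admissible in the same problem, which is exactly why the remark that domination automatically keeps the support in $B^p$ (established just after the definition of dominance) is needed. The closure argument in point 3 is then essentially formal, relying only on point 1, point 2, and the fact that $w$-comonotonicity is defined as a weak-star closure.
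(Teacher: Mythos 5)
Your proposal is correct and takes essentially the same route as the paper, whose own proof is a one-liner deferring point 1 to the already-noted optimality of $T_{\psi}$ under strict convexity, point 2 to Theorem \ref{dominancedimd}, and point 3 to points 1 and 2, exactly as you argue. The one detail you gloss over in point 3 --- that the set of $w$-comonotone allocations, defined via sequential weak-star limits in Definition \ref{defcom22}, is genuinely weak-star closed --- does hold, because the weak-star topology on probability measures supported in the compact set $B^{p}$ is metrizable, so sequential closure coincides with closure.
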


\begin{proof}
Point $1$ is a property already mentioned several times. Point $2$ follows from Theorem \ref{dominancedimd} and point $3$ follows from points $1$ and $2$.

\end{proof}

 Note that by definition, if $\gamma_0$ is strictly $w$-comonotone then the value of problem \pref{pbmedualpsi} is zero.  We shall also prove (see section \ref{proofs}) weak form of the converse, namely that if  the value of problem \pref{pbmedualpsi} is zero then $\gamma_0$ is $w$-comonotone. Therefore, the value of \pref{pbmedualpsi}  as a function of the joint law $\gamma_0$ can be viewed as a numerical criterion for comonotonicity and thus for efficiency. One can therefore, in principle, use on data this value as a test statistic for efficiency.

%%%%%%%%%%%%%%%%%%%%%%%%%%%%%%%%%%

\subsection{Remarks on multivariate comonotonicity}\label{disccom}

{\textbf{Comparison with the notion of $\mu$-comonotonicity of \cite{EGH09}}.}
The notion of multivariate comonotonicity considered in this paper is to be related
to the notion of $\mu $-comonotonicity proposed by Ekeland, Galichon and
Henry in \cite{EGH09}. Recall the alternative characterization of
comonotonicity given in the univariate case in Proposition \ref{maxcor}: $%
X_{1}$ and $X_{2}$ are comonotone if and only if $\varrho _{\mu }\left(
X_{1}+X_{2}\right) =\varrho _{\mu }\left( X_{1}\right) +\varrho _{\mu
}\left( X_{2}\right) $ for a measure $\mu $ that  is sufficiently regular. In dimension $d$, \cite{EGH09} have introduced the concept of $\mu $%
-comonotonicity, based on this idea: if $\mu $ is a probability measure on $%
\mathbb{R}^{d}$ which does not give positive mass to small sets, two random
vectors $X_{1}$ and $X_{2}$ on $\mathbb{R}^{d}$ are called $\mu $\emph{%
-comonotone} if and only if
\begin{equation*}
\varrho _{\mu }\left( X_{1}+X_{2}\right) =\varrho _{\mu }\left( X_{1}\right)
+\varrho _{\mu }\left( X_{2}\right),
\end{equation*}%
where the (multivariate) \emph{maximum correlation functional }(see e.g.
\cite{Ru06}\ or \cite{EGH09}) is defined by%
\begin{equation*}
\varrho _{\mu }\left( X\right) =\sup_{\tilde{Y}\sim \mu }{\mathbb{E}}\left(
X\cdot \tilde{Y}\right) .
\end{equation*}%
The authors of \cite{EGH09} show that $X_{1}$ and $X_{2}$ are $\mu $-comonotone if and only
if there are two convex functions $\psi _{1}$ and $\psi _{2}$, and a random
vector $U\sim \mu $ such that
\begin{equation*}
X_{1}=\nabla \psi _{1}\left( U\right) \text{ and }X_{2}=\nabla \psi
_{2}\left( U\right)
\end{equation*}%
holds almost surely. Therefore, the present notion of multivariate
comonotonicity approximately consists of calling $X_{1}$ and $X_{2}$
comonotone if and only if there is some measure $\mu $ such that $X_{1}$ and
$X_{2}$ are $\mu $-comonotone. There are, however, qualifications to be
added. Indeed, \cite{EGH09} require some regularity on the measure $\mu$. In the current setting, no regularity restrictions are imposed on $\mu $; but instead restrictions on the
convexity of $\psi _{1}$ and $\psi _{2}$ have to be imposed to define the notion of $w $%
-comonotonicity before passing to the limit. Although not equivalent, these
two sets of restrictions originate from the same concern: two random vectors
are always optimally coupled with very degenerate distributions, such as the
distribution of constant vectors. Therefore one needs to exclude these
degenerate cases in order to avoid a definition which would be void of
substance. This is the very reason why the strictly convex $w_i$ functions had to be introduced.

\smallskip

{\textbf{Comonotone allocations do not form a bounded set}.}
In the scalar case, comonotone allocations are
parameterized by the set of nondecreasing functions summing to the identity
map. This set of functions is convex and equilipschitz hence compact (up to
adding constants summing up to zero). This compactness is no longer true in
higher dimensions (at least when $w=0$ and we work on the whole space instead of $B$), and we believe that this is a major structural difference
with respect to the univariate case. For simplicity assume that $p=2$. As
outlined in paragraph \ref{defeffcom}, a comonotone allocation $(X_{1},X_{2})$ of $%
X$ is given by a pair of functions that are composed of gradients of convex
functions and sum up to the identity map. It is no longer true, in dimension $2$
that this set of maps is compact (up to constants). Indeed, let us take $n\in
{\mathbb{N}}^{\ast }$, and quadratic $\psi _{1}$ and $\psi _{2}$ of the form
\begin{equation*}
\psi _{i}(x)=\frac{1}{2}\<S_{i}^{-1}x,x>,\;i=1,2,\;x\in \mathbb{R}^{2}
\end{equation*}%
with
\begin{equation*}
S_{1}=\left(
\begin{array}{cc}
\frac{1}{2} & \frac{1}{8\sqrt{n}} \\
\frac{1}{8\sqrt{n}} & \frac{1}{2n}%
\end{array}%
\right) ,\hspace{1cm}S_{2}=\left(
\begin{array}{cc}
\frac{1}{2} & \frac{-1}{8\sqrt{n}} \\
\frac{-1}{8\sqrt{n}} & \frac{1}{2n}%
\end{array}%
\right).
\end{equation*}%
Then the corresponding map $T_{\psi }$ is linear, and $T_{\psi }^{1}$ is given by
the matrix
\begin{equation*}
S_{1}(S_{1}+S_{2})^{-1}=\left(
\begin{array}{cc}
\frac{1}{2} & \frac{\sqrt{n}}{8} \\
\frac{1}{8\sqrt{n}} & \frac{1}{2}%
\end{array}%
\right)
\end{equation*}%
which is unbounded.

\smallskip

{\textbf{Comonotone allocations do not form a convex set}.}
Another difference with the univariate case is that the set of maps of the
form $T_{\psi }$ used to define comonotonicity is not convex. To see this
(again in the case $p=d=2$), it is enough to show that the set of pairs of $%
2\times 2$ matrices
\begin{equation*}
K:=(S_{1}(S_{1}+S_{2})^{-1},S_{2}(S_{1}+S_{2})^{-1}),\;S_{i}%
\mbox{ symmetric, positive
definite},\;i=1,2\}
\end{equation*}%
is not convex. First let us remark that if $(M_{1},M_{2})\in K$ then $M_{1}$
and $M_{2}$ have a positive determinant. Now for $n\in {\mathbb{N}}^{\ast }$%
, and $\eps\in (0,1)$ consider
\begin{equation*}
S_{1}=\left(
\begin{array}{cc}
1 & \sqrt{1-\eps} \\
\sqrt{1-\eps} & 1%
\end{array}%
\right) ,\hspace{1cm}S_{2}=\left(
\begin{array}{cc}
1 & -\sqrt{1-\eps} \\
-\sqrt{1-\eps} & 1%
\end{array}%
\right) ,
\end{equation*}%
\begin{equation*}
S_{1}^{\prime }=\left(
\begin{array}{cc}
1 & \sqrt{n-\eps} \\
\sqrt{n-\eps} & n%
\end{array}%
\right) ,\hspace{1cm}S_{2}^{\prime }=\left(
\begin{array}{cc}
1 & -\sqrt{n-\eps} \\
-\sqrt{n-\eps} & n%
\end{array}%
\right) ,
\end{equation*}%
and define:
\[M_i=S_i (S_{1}+S_{2})^{-1}, \; M_{i}^{\prime }=S_{i}^{\prime }(S_{1}^{\prime
}+S_{2}^{\prime })^{-1}, \; i=1,2.\]
If $K$ was convex then the following matrix  would have a positive determinant
\begin{equation*}
M_{1}+M_{1}^{\prime }=\left(
\begin{array}{cc}
1 & \frac{\sqrt{1-\eps}}{2}+\frac{\sqrt{n-\eps}}{2n} \\
\frac{\sqrt{1-\eps}}{2}+\frac{\sqrt{n-\eps}}{2} & 1%
\end{array}%
\right) ,
\end{equation*}%
which is obviously false for $n$ large
enough and $\eps$ small enough.

\section{Concluding remarks}\label{seccl}

In this paper, we have extended   Landsberger and Meilijson's comonotone dominance principle to the multivariate case by introducing the  variational    problem \pref{pbmeprimalyy}. We have then extended the univariate theory of efficient
risk-sharing to the case of several goods without perfect substitutability,
and we derived tractable implications. Two observations can be made at this point. In the first place, this paper demonstrates \textit{the intrinsic difficulty of the multivariate case,} as many features of the univariate case do not extend to higher dimensions: computational ease, the
compactness and convexity of efficient risk-sharing allocations. Second, it illustrates \textit{the need for qualification} inherent to the multivariate case. Contrary to the univariate case, the need to quantify strict convexity as in this paper comes by no
coincidence. In fact, just as \cite{EGH09} impose regularity conditions
on their \textquotedblleft baseline measure\textquotedblright\ to avoid
degeneracy, we work with cones which are strictly included in the cone of
convex functions by quantifying the strict convexity of the functions used.

Getting back to our initial motivation, namely, finding testable implications  of efficiency for the concave order, we already emphasized in paragraph \ref{dommult} that one obtains as a byproduct of our variational approach a numerical criterion that could in principle be used as a test statistic for comonotonicity and thus for efficiency. We thus believe that the present work paves the way for an interesting research agenda. First of all, an efficient algorithm to decide whether a given allocation in the multivariate case is comonotone or not remains to be discovered -- we are currently investigating this point. The convex nature of the underlying optimization problem helps, but the constraints of problem $(\mathcal{P}^*)$ are delicate to handle numerically. Finally, this work opens a research agenda on the empirical relevance of the multivariate theory confronted to the data: do observations of realized allocations of risk satisfy restrictions imposed by multivariate comonotonicity? As mentioned above, tests in the univariate case have been performed by \cite{Attanasio} and \cite{townsend} and suggest rejection. But there is hope that in the more flexible setting of multivariate risks, efficiency would be less strongly rejected. %This is a research line we shall pursue in upcoming work.

%%%%%%%%%%%%%%%%%%%%%%%%%%%%%%%%%%%%%%%%%%%%%%

\section{Proofs}

\label{proofs}

\subsection{Proof of Lemma \ref{caractstrict}}

Clearly $1\Rightarrow  2$ and $3\Rightarrow  1$ are obvious. To prove that $2\Rightarrow 3$,  assume that 2 holds true. Let  $\mu:=\LL(X)$ and  $\nu:=\LL(Y)$.  These   probability measures are supported by some closed ball $B$, and the Cartier-Fell-Meyer theorem states that there is a measurable family of conditional probability measures $(T_x)_{x\in B}$ such that $T_x$ has mean $x$ and for every $f$ continuous function, one has
\[\E(f(Y))=\int_B f(y) d\nu(y)=\int_B \int_B f(y) dT_x(y) d \mu(x)\]
Since $\mu\neq \nu$, $\mu (\{ x\in B \; : \;  T_x\neq \delta_x\})>0$, one deduces  from Jensen's inequality that  for every strictly convex function $\varphi$,  $\E(\varphi(Y))>\E(\varphi(X))$.

\subsection{Proof of Lemma \ref{caractloi}}

For notational simplicity, assume that $d=1$, $p=2$, $X$ takes values in $[0,2]$ a.s. (so that $m_0$ has support in $[0,2]$) and $\gamma$ is supported by $[0,1]^2$. For every $n\in \N^*$ and $k\in \{0,..., 2^{n+1}\}$, set
\[ X^n:=\sum_{k=0}^{2^{n+1}} \frac{k}{2^n} \ind_{A_k,n}, \; \mbox{ where } A_{k,n}:=\left\{\omega\in \Omega \; : \; X(\omega)\in \Big[\frac{k}{2^n}, \frac{k+1}{2^n}\Big[\right\},\]
and
\[C_{k,n}:=\left\{(y_1,y_2)\in[0,1]^2 \mbox{ : } y_1+y_2 \in \Big[\frac{k}{2^n}, \frac{k+1}{2^n}\Big[\right\}.\]
Decompose the strip $C_{k,n}$  into a partition by triangles
\[C_{k,n}=\bigcup_{k\leq i+j \leq k+1} T_{k,n}^{i,j}, \;  T_{k,n}^{i,j}:=C_{k,n} \cap \Big [\frac{i}{2^n}, \frac{i+1}{2^n}\Big[\times \Big [\frac{j}{2^n}, \frac{j+1}{2^n}\Big[.\]
Since $\Pi_{\Sigma}(\gamma)=m_0$ one has:
\[\PP(A_{k,n})=\gamma(C_{k,n}) =\sum_{k\leq i+j \leq k+1} \gamma(T_{k,n}^{i,j}),\]
and since $(\Omega, \FF, \PP)$ is non-atomic, it follows from Lyapunov's convexity theorem (see \cite{lyapunov}) that there exists a partition of $A_{k,n}$ into measurable subsets $A_{k,n}^{i,j}$ such that
\begin{equation}\label{lyap}
\gamma(T_{k,n}^{i,j})=\PP(A_{k,n}^{i,j}),  \forall (i,j)\in \{0,..., 2^n\} \; : \; k\leq i+j \leq k+1.
\end{equation}
Choose $(y_1,y_2)_{k,n}^{i,j} \in T_{k,n}^{i,j}$ and define
\[\yy^n=(Y_1^n, Y_2^n):=\sum_{k=0}^{2^{n+1}} \sum_{k\leq i+j \leq k+1} (y_1,y_2)_{k,n}^{i,j} \ind_{A_{k,n}^{i,j}}.\]
We may also choose inductively the partition of $A_{k,n}$ by the $A_{k, n}^{i,j}$ to be finer and finer with respect to $n$. By construction, one obtains
\[ \max\Big(\Vert X^n-X\Vert_{L^{\infty}}, \Vert X^n-Y_1^n-Y_2^n\Vert_{L^{\infty}}, \Vert \yy^{n+1}-\yy^n\Vert_{L^{\infty}}\Big) \leq \frac{1}{2^n},\]
so that $\yy^n$ is a Cauchy sequence in $L^{\infty}$, and thus converges to some $\yy=(Y_1,Y_2)$. One then sees that $Y_1+Y_2=X$,  and passing to the limit in \pref{lyap}, it follows that $\LL(\yy)=\gamma$.

\subsection{Proofs and variational characterization for the multivariate dominance result}\label{proof3}

The proofs will very much rely on the linear programming problem:
\begin{equation*}
(\mathcal{P}^*) \; \sup_{\gamma\in K(\gamma_0)} -\int_{B^p} \sum_{i=1}^p
w_i(x_i) d\gamma (x)
\end{equation*}
where $K(\gamma_0)$ consists of all $\gamma\in\mathcal{M}_B(m_0)$ such that
for each $i$ the marginal $\gamma^i$ dominates the corresponding marginal of
$\gamma_0$ i.e.:
\begin{equation*}
\int_{B^p} \varphi(x_i) d \gamma(x)\leq \int_{B^p} \varphi(x_i)
d\gamma_0(x), \forall \varphi \mbox{ convex on $B$}.
\end{equation*}
Problem $(\mathcal{P}^*)$ presents  similarities with the problem solved  in \cite{gs}. In
the optimal transport problem considered in \cite{gs}, one minimizes the
average of some quadratic function over joint measures having prescribed
marginals whereas $(\mathcal{P}^*)$ includes dominance constraints on the
marginals. To shorten notations, define
\begin{equation*}
\eta(x):=-\sum_{i=1}^p w_i(x_i), \forall x=(x_1,..., x_p)\in B^p
\end{equation*}

$(\mathcal{P}^*)$ is the dual problem (see the next lemma for details) of
\begin{equation*}
(\mathcal{P}) \inf \Big \{ \int_{B^p} \Big(\sum_{i=1}^p \varphi_i(x_i)-\varphi_0%
\Big(\sum_{i=1}^p x_i\Big) \Big)d \gamma_0(x), \; (\varphi_0,..., \varphi_p) \in E %
\Big\},
\end{equation*}
where $E$ consists of all families $\varphi:=(\varphi_1,...., \varphi_p,
\varphi_0)\in C(B)^p \times C(pB)$ such that $\varphi_i\in \mathcal{C}$ and
\begin{equation*}
\sum_{i=1}^p \varphi_i(x_i)- \varphi_0\Big(\sum_{i=1}^p x_i\Big) \geq -\sum_{i=1}^p
w_i(x_i).
\end{equation*}
It will also be convenient to consider
\begin{equation*}
(\mathcal{Q}) \inf \Big \{J(\psi) \;, \; \psi=(\psi_1,..., \psi_p)
\mbox{ :
each $\psi_i$  is such that $\psi_i-w_i$ is convex} \Big\}
\end{equation*}
with
\begin{equation*}
J(\psi):=\int_{B^p} \Big( \sum_{i=1}^p \psi_i(x_i)-\Box_i \psi_i \Big(\sum_{i=1}^p x_i%
\Big) \Big) d\gamma_0(x).
\end{equation*}
Note that by construction $J(\psi)\geq 0$ for every admissible $\psi$ and $%
J(\psi)=0$ if and only if $\gamma_0=\gamma_{\psi}$.

\begin{lem}
\label{dualite11} The following holds
\begin{equation*}
\max (\mathcal{P}^*)= \inf (\mathcal{P})=\inf (\mathcal{Q}) - \int_{B^p}
\sum_{i=1}^p w_i(x_i) d\gamma_0 (x).
\end{equation*}
\end{lem}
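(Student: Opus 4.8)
The plan is to prove the two equalities separately: first the elementary reduction $\inf(\mathcal{P})=\inf(\mathcal{Q})-\int_{B^p}\sum_i w_i\,d\gamma_0$, obtained by eliminating the free variable $\varphi_0$, and then the genuine duality $\max(\mathcal{P}^*)=\inf(\mathcal{P})$, which I would obtain by a Lagrangian/minimax argument. The reduction is routine; the duality (more precisely, the absence of a gap) is the heart of the matter.

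For the first equality, set $\psi_i:=\varphi_i+w_i$. Since each $w_i$ is strictly convex and continuous, the map $\varphi_i\mapsto\psi_i$ is a bijection between $\{\varphi_i\in\mathcal{C}\}$ and $\{\psi_i:\psi_i-w_i\in\mathcal{C}\}$, and each such $\psi_i$ is convex and continuous. For a fixed family $(\varphi_i)$, the inequality defining $E$ reads $\varphi_0(\sum_i x_i)\leq\sum_i\psi_i(x_i)$ for all $(x_1,\dots,x_p)\in B^p$; since every $z\in pB$ together with every admissible decomposition $z=\sum_i y_i$ occurs, this is equivalent to $\varphi_0\leq\Box_i\psi_i$ pointwise on $pB$. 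As $\varphi_0$ enters the objective of $(\mathcal{P})$ with a minus sign, the optimal choice is $\varphi_0=\Box_i\psi_i$, which is admissible because the infimal convolution of finitely many continuous convex functions on the compact set $B$ is convex and continuous on $pB$ (Berge's theorem). Substituting $\varphi_i=\psi_i-w_i$ and $\varphi_0=\Box_i\psi_i$ turns the objective of $(\mathcal{P})$ into $J(\psi)-\int_{B^p}\sum_i w_i\,d\gamma_0$, and taking the infimum over $\psi$ yields the claimed identity.

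For $\max(\mathcal{P}^*)=\inf(\mathcal{P})$ I would introduce, on probability measures $\gamma$ on $B^p$, the Lagrangian
\begin{equation*}
\begin{split}
L(\gamma;\varphi):={}&-\int_{B^p}\sum_{i=1}^p w_i(x_i)\,d\gamma+\sum_{i=1}^p\int_{B^p}\varphi_i(x_i)\,d(\gamma_0-\gamma)\\
&-\int_{B^p}\varphi_0\Big(\sum_{i=1}^p x_i\Big)\,d(\gamma_0-\gamma),
\end{split}
\end{equation*}
where $\varphi_0\in C(pB)$ is free and each $\varphi_i\in\mathcal{C}$; write $\inf_\varphi$ for the infimum over this multiplier family. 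Minimising over the multipliers recovers exactly the constraints defining $K(\gamma_0)$: the free $\varphi_0$ forces $\Pi_\Sigma\gamma=m_0$, while the cone $\mathcal{C}$ (which contains $0$, so a negative value is driven to $-\infty$ by scaling) forces the marginal dominance $\gamma^i\preccurlyeq\gamma_0^i$. Hence $\sup_\gamma\inf_\varphi L=(\mathcal{P}^*)$, the supremum running over all probability measures on $B^p$. Conversely, for fixed $\varphi$ the map $\gamma\mapsto L$ is affine and weak-$*$ continuous, so $\sup_\gamma L$ equals the $\gamma_0$-terms $\int_{B^p}\big(\sum_i\varphi_i(x_i)-\varphi_0(\sum_i x_i)\big)\,d\gamma_0$ plus $\max_{x\in B^p}\big[\varphi_0(\sum_i x_i)-\sum_i\varphi_i(x_i)-\sum_i w_i(x_i)\big]$. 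A constant shift $\varphi_0\mapsto\varphi_0-c$, with $c$ equal to this maximum, leaves $L$'s value unchanged while making the bracket vanish and placing $(\varphi_0,\varphi_i)$ in $E$; a short two-sided comparison then shows $\inf_\varphi\sup_\gamma L=\inf_{(\varphi_0,\varphi_i)\in E}\int_{B^p}\big(\sum_i\varphi_i(x_i)-\varphi_0(\sum_i x_i)\big)\,d\gamma_0=(\mathcal{P})$.

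It remains to exchange the two extrema, which is the main obstacle. The set of probability measures on the compact set $B^p$ is convex and weak-$*$ compact, $L$ is affine and weak-$*$ upper semicontinuous in $\gamma$ and affine and (sup-norm) lower semicontinuous in $\varphi$; Sion's minimax theorem then gives $\sup_\gamma\inf_\varphi L=\inf_\varphi\sup_\gamma L$, i.e. $(\mathcal{P}^*)=(\mathcal{P})$. Finally, $K(\gamma_0)$ is a weak-$*$ closed subset of this compact set (the constraints $\Pi_\Sigma\gamma=m_0$ and $\gamma^i\preccurlyeq\gamma_0^i$ are weak-$*$ closed) and $\gamma\mapsto-\int_{B^p}\sum_i w_i\,d\gamma$ is weak-$*$ continuous, so the supremum in $(\mathcal{P}^*)$ is attained, which justifies writing $\max$. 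I expect the delicate point to be precisely this minimax exchange: everything else is bookkeeping, but the no-gap conclusion must rest on the compactness of the measure side together with the conical/affine structure of the constraints, so that the degenerate multipliers are disposed of by cone-scaling rather than by a Slater-type qualification.
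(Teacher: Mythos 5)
Your proof is correct, but it reaches the key equality $\max(\mathcal{P}^*)=\inf(\mathcal{P})$ by a genuinely different mechanism than the paper. The elimination of $\varphi_0$ via $\varphi_0=\Box_i\psi_i$ is identical in both treatments (the paper simply asserts admissibility of this choice; you justify continuity of the constrained infimal convolution by Berge's theorem, which is fine, though the lower hemicontinuity of the correspondence $x\mapsto\{y\in B^p:\sum_i y_i=x\}$ deserves a one-line check -- it does hold since $B$ is a ball). For the duality itself, the paper applies the Fenchel--Rockafellar theorem to the decomposition $\inf_\varphi F(\Lambda\varphi)+G(\varphi)$, computing the adjoint $\Lambda^*\gamma=(\gamma^1,\dots,\gamma^p,-\Pi_\Sigma\gamma)$ and the conjugates $F^*$, $G^*$; this delivers the absence of a gap \emph{and} dual attainment in one stroke, at the cost of verifying the qualification hypothesis (continuity of $F$ at some admissible point, satisfied e.g.\ by $\varphi_i=0$ and $\varphi_0$ a sufficiently negative constant -- a point the paper glosses over). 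You instead set up a Lagrangian that is affine in each variable, recover the constraints of $K(\gamma_0)$ by cone-scaling of the multipliers and $\Pi_\Sigma\gamma=m_0$ from the free $\varphi_0$, recover $(\mathcal{P})$ via the shift normalization $\varphi_0\mapsto\varphi_0-c$ landing in $E$, and exchange the extrema by Sion's minimax theorem, exploiting the weak-$*$ compactness and convexity of the probability measures on $B^p$; attainment in $(\mathcal{P}^*)$ is then proved separately and directly from weak-$*$ closedness of $K(\gamma_0)$ plus continuity of the objective. The trade-off: Fenchel--Rockafellar is shorter once the functional-analytic frame is in place and yields the maximizer for free, whereas your route replaces the Slater-type qualification with compactness of the measure side (as you correctly identify), making the no-gap argument more self-contained, and your direct compactness proof of attainment is arguably cleaner than extracting it from the duality theorem. (One cosmetic remark: with the paper's convention $X\succcurlyeq Y$ meaning $\E(\varphi(X))\leq\E(\varphi(Y))$ for convex $\varphi$, the marginal constraint you write as $\gamma^i\preccurlyeq\gamma_0^i$ is the paper's $\gamma^i\succcurlyeq\gamma_0^i$; your Lagrangian term has the correct sign, so this is purely notational.)
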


\begin{proof}
Let us write $\Prim$ in the form
\[\inf_{\varphi=(\varphi_1,..., \varphi_p, \varphi_0)\in C(B)^p \times C(pB)}  F(\Lambda \varphi)+G(\varphi)\]
where $\Lambda$  :  $C(B)^{p}\times C(pB) \to C(B^p)$ is the linear continuous map defined by
\[\Lambda \varphi(x):=\sum_{i=1}^p \varphi_i(x_i)-\varphi_0\Big(\sum_{i=1}^p x_i\Big), \; \forall x=(x_1,..., x_p)\in B^p,\]
and $F$, $G$ are the convex lower semicontinuous (for the uniform norm) functionals defined  by
\[F(\theta)=\left\{\begin{array}{lll}
\int_{B^p} \theta d \gamma_0  \mbox{ if } &  \theta \geq \eta\\
+\infty & \mbox{ otherwise }
\end{array}
\right., \; \forall \theta\in C(B^p)\]
\[G(\varphi)=\left\{\begin{array}{lll}
0 \mbox{ if } &  (\varphi_1,..., \varphi_p)\in \CC^p\\
+\infty & \mbox{ otherwise}
\end{array}
\right., \forall \varphi =(\varphi_1,..., \varphi_p, \varphi_0)\in C(B)^p \times C(pB).\]
It is easy to see that the assumptions of  Fenchel-Rockafellar's duality theorem (see for instance \cite{Ektem})  are satisfied and thus
\[\inf \Prim= \max_{\gamma \in \M(B^p) } -F^*(\gamma_0-\gamma)-G^*(\Lambda^*(\gamma-\gamma_0)).\]
The adjoint of $\Lambda$,  $\Lambda^*$ is easily computed as : $\M(B^p) \to \M(B)^p \times \M(pB)$ (where $\M$ denotes the space of Radon measures):
\[\Lambda^* \gamma=(\gamma^1, ..., \gamma^p, -\Pi_{\Sigma} \gamma), \; \forall \gamma \in \M(B^p).\]
Direct computations give
\[F^*(\gamma-\gamma_0)= \left\{\begin{array}{lll}
-\int_{B^p} \eta d \gamma \mbox{ if } &  \gamma \geq 0\\
+\infty & \mbox{ otherwise }
\end{array}
\right.\]
and
\[G^*(\Lambda^*(\gamma-\gamma_0))=\left\{\begin{array}{lll}
0 \mbox{ if } &  \gamma\in K(\gamma_0)\\
+\infty & \mbox{ otherwise.}
\end{array}
\right.\]
Therefore $\PPst$ is the dual of $\Prim$ in the usual sense of convex programming and $\max \PPst= \inf \Prim$. To prove that
\[\inf \Prim=\inf \Qrim - \int_{B^p} \sum_{i=1}^p w_i(x_i) d\gamma_0 (x),\]
take $\varphi\in E$ and $\psi_i :=w_i+\varphi_i$ for $i=1,.., p$, the constraint then reads as
\[\sum_{i=1}^p \psi_i(x_i)\geq \varphi_0\Big(\sum_{i=1}^p x_i\Big), \; \forall x\in B^p.\]
Now in $\Prim$, one needs to choose $\varphi_0$ as large as possible without violating this constraint. Thus the best $\varphi_0$ given $(\varphi_1,..., \varphi_p)$ is
\[\varphi_0=\Box_i \psi_i,\]
which proves the desired identity.

\end{proof}

\begin{lem}
\label{lemderiv} Let $\psi_i$ be such that $\psi_i-w_i\in \mathcal{C}$
for every $i$ and $g=(g_1,..., g_p)\in \mathcal{C}^p$. Then
\begin{equation*}
\begin{split}
\lim_{\delta\to 0^+} \frac{1}{\delta} [J(\psi+\delta g)-J(\psi)] = \sum_{i=1}^p
\int_{B} g_i(x_i) d (\gamma_0^i -\gamma_{\psi}^i) \\
=\int_{B^p} \sum_{i=1}^p g_i(x_i) d (\gamma_0-\gamma_{\psi})(x).
\end{split}%
\end{equation*}
\end{lem}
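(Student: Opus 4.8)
The plan is to split $J$ into its linear part and its infimal‑convolution part, differentiate each, and then recognize the answer through the defining identity of $\gamma_\psi$. Writing
\[
J(\psi)=\sum_{i=1}^p\int_B\psi_i(x_i)\,d\gamma_0^i(x_i)-\int_{B^p}\Box_i\psi_i\Big(\sum_{j=1}^p x_j\Big)\,d\gamma_0(x),
\]
the first sum is linear in $\psi$, so its directional derivative in direction $g$ is immediately $\sum_i\int_B g_i\,d\gamma_0^i$. Note that for $\delta>0$ the perturbation is admissible, since $(\psi_i+\delta g_i)-w_i=(\psi_i-w_i)+\delta g_i\in\mathcal{C}$. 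Everything thus reduces to differentiating $\delta\mapsto\int_{B^p}\Box_i(\psi_i+\delta g_i)(\sum_j x_j)\,d\gamma_0$ at $\delta=0^+$.

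For the pointwise derivative of the infimal convolution I would use an envelope (Danskin‑type) argument assembled from two competitor inequalities, which avoids any differentiability hypothesis. Fix $s\in pB$ and let $y^\delta(s)=(y_1^\delta(s),\dots,y_p^\delta(s))$ be the minimizer defining $\Box_i(\psi_i+\delta g_i)(s)$, unique because $\sum_i\psi_i$ is strictly convex (as $\psi_i-w_i$ is convex and $w_i$ strictly convex), with $y^0(s)=T_\psi(s)$. Testing the perturbed problem with $T_\psi(s)$ and the unperturbed problem with $y^\delta(s)$ gives
\[
\Box_i\psi_i(s)+\delta\sum_i g_i(y_i^\delta(s))\;\le\;\Box_i(\psi_i+\delta g_i)(s)\;\le\;\Box_i\psi_i(s)+\delta\sum_i g_i(T_\psi^i(s)).
\]
Dividing by $\delta>0$ squeezes the difference quotient between $\sum_i g_i(y_i^\delta(s))$ and $\sum_i g_i(T_\psi^i(s))$. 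The minimizers lie in the compact set $B^p$, and any cluster point of $y^\delta(s)$ as $\delta\to 0^+$ minimizes the unperturbed problem by continuity, hence equals $T_\psi(s)$ by uniqueness; so $y^\delta(s)\to T_\psi(s)$, and continuity of $g_i$ collapses both bounds to the common limit $\sum_i g_i(T_\psi^i(s))$.

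To pass from the pointwise derivative to the derivative of the integral I would invoke dominated convergence: the same two inequalities bound the difference quotient by $\sum_i\|g_i\|_{\infty}$, uniformly in $s$ and $\delta$, and $\gamma_0$ is a probability measure on the compact $B^p$. Hence the limit of the $J$ difference quotient equals $\sum_i\int_B g_i\,d\gamma_0^i-\int_{B^p}\sum_i g_i\big(T_\psi^i(\sum_j x_j)\big)\,d\gamma_0$. Finally I would rewrite the last integral using the defining identity $\int_{B^p}f\,d\gamma_\psi=\int_{pB}f(T_\psi(x))\,dm_0(x)$ together with $\Pi_\Sigma\gamma_0=m_0$, i.e. $\int_{pB}h\,dm_0=\int_{B^p}h(\sum_j x_j)\,d\gamma_0$; applied to $f(y)=\sum_i g_i(y_i)$ this shows the integral equals $\sum_i\int_B g_i\,d\gamma_\psi^i$, yielding $\sum_i\int_B g_i\,d(\gamma_0^i-\gamma_\psi^i)$. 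The second displayed equality of the statement is then just the marginal identity $\int_B g_i(x_i)\,d\gamma^i=\int_{B^p}g_i(x_i)\,d\gamma$ for $\gamma_0$ and $\gamma_\psi$. The main obstacle is the envelope step: securing convergence of the perturbed minimizers $y^\delta(s)$ to $T_\psi(s)$ and a domination good enough to commute the limit with the integral; uniqueness of the minimizer is exactly what turns the two one‑sided bounds into a genuine derivative rather than mismatched Dini derivatives.
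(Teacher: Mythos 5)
Your proposal is correct and follows essentially the same route as the paper: the same decomposition of $J$ into the linear term and the infimal-convolution term, the same two competitor inequalities squeezing the difference quotient between $\sum_i g_i(y_i^\delta)$ and $\sum_i g_i(T_\psi^i(s))$, convergence of the perturbed minimizers via compactness of $B$ and strict convexity, and dominated convergence using the uniform bound $\sum_i\|g_i\|_\infty$. The final rewriting through the defining identity of $\gamma_\psi$ and $\Pi_\Sigma\gamma_0=m_0$ also matches the paper's conclusion.
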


\begin{proof}
For $\delta>0$, one first gets that
\[\begin{split}
\frac{1}{\delta} [J(\psi+\delta g)-J(\psi)] = \sum_{i=1}^p \int_{B}  g_i(x_i) d (\gamma_0^i)-
\\\int_{pB} \frac{1}{\delta}   \Big( \Box_i (\psi_i+\delta g_i)(x)-\Box_i \psi_i (x) \Big)d m_0(x).
\end{split}\]
Note that the integrand in the second term is bounded since $g$ is. Now fix some $(x_1,..., x_p)\in B^p$, and set $x=\sum_{i=1}^p x_i$, $y_i:=T_{\psi}^i (x)$ and $y^{\delta}_i:=T_{\psi+\delta g}(x)$. Since $\sum_{i=1}^p y_i = \sum_{i=1}^p y_i^{\delta}=x$, it comes as a direct consequence of the definition of infimal convolutions that:
\begin{equation}\label{ineg41}
\frac{1}{\delta}   \Big( \Box_i (\psi_i+\delta g_i)(x)-\Box_i \psi_i (x) \Big) \leq \sum_{i=1}^p g_i (y_i)
\end{equation}
and
\begin{equation}\label{ineg42}
\frac{1}{\delta}   \Big( \Box_i (\psi_i+\delta g_i)(x)-\Box_i \psi_i (x) \Big) \geq \sum_{i=1}^p g_i (y_i^{\delta}).
\end{equation}
Using the compactness of $B$ and the strict convexity of $\psi_i$, it is easy to check that $y_i^{\delta} \to y_i$ as $\delta\to 0^+$. Therefore, from \pref{ineg41} and \pref{ineg42} one has
\[\lim_{\delta \to 0^+} \frac{1}{\delta}   \Big( \Box_i (\psi_i+\delta g_i)(x)-\Box_i \psi_i (x) \Big) = \sum_{i=1}^p g_i (T_{\psi}^i(x))\]
and this holds for every $x\in pB$. It then follows from Lebesgue's dominated convergence theorem that
\[\begin{split}
\lim_{\delta\to 0^+} \frac{1}{\delta} [J(\psi+\delta g)-J(\psi)] =  \sum_{i=1}^p \int_{B}  g_i(x_i) d (\gamma_0^i)-  \sum_{i=1}^p \int_{pB}  g_i(T_{\psi}^i(x)) dm_0(x) \\
=  \sum_{i=1}^p  \int_{B} g_i(x_i) d (\gamma_0^i -\gamma_{\psi}^i)= \int_{B^p} \sum_{i=1}^p g_i(x_i) d (\gamma_0-\gamma_{\psi})(x).
\end{split}\]

\end{proof}

It follows from Lemma \ref{lemderiv} that, if $\psi$ solves $(\mathcal{Q})$,
then $\gamma_{\psi}$ dominates $\gamma_0$. Hence, if one knew that $(\mathcal{Q%
})$ possesses solutions, the existence of an $\omega$-strictly comonotone
allocation dominating $\gamma_0$ would directly follow. Unfortunately, it
is not necessarily the case that the infimum in $(\mathcal{Q})$ is attained -- or
at least we haven't been able to prove without additional conditions. The
difficulty here comes from the fact that minimizing sequences need not be
bounded (see paragraph \ref{disccom}). It may be the case that additional regularity assumptions on $%
\gamma_0$ would guarantee existence. In the present paper no such assumption is made, and a different path is chosen to overcome the difficulty by an appeal to Ekeland's variational principle.

\begin{lem}
\label{ekelandvp} Letting $\eps>0$, there exists $\psi_\eps$ admissible for $(%
\mathcal{Q})$ such that

\begin{enumerate}
\item $J(\psi_\eps) \leq \inf (\mathcal{Q}) +\eps$

\item
\begin{equation*}
\limsup_{\eps \to 0^+} \int_{B^p} \sum_{i=1}^p \varphi_i(x_i) d
(\gamma_{\psi_\eps} -\gamma_0) \leq 0
\end{equation*}
for every $(\varphi_1,..., \varphi_p)\in \mathcal{C}^p$

\item
\begin{equation*}
\liminf_{\eps \to 0^+} \int_{B^p} \sum_{i=1}^p \varphi_i^\eps (x_i) d
(\gamma_{\psi_\eps} -\gamma_0) \geq 0
\end{equation*}
for $\varphi_i^{\eps}=\psi_{i, \eps}-w_i$ (these are convex functions
by definition).
\end{enumerate}
\end{lem}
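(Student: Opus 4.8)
The plan is to apply Ekeland's variational principle to the functional $J$ over the feasible set of $(\mathcal{Q})$, and then to read off points 1--3 from the resulting approximate first--order conditions, differentiating $J$ along admissible directions by means of Lemma \ref{lemderiv}. The only genuine subtlety, and the reason one appeals to Ekeland's principle rather than minimizing $J$ directly, is that minimizing sequences of $(\mathcal{Q})$ need not be bounded; this is handled through an $\eps$--dependent choice of the Ekeland parameter. To set up the metric space, I write each feasible $\psi$ as $\psi=w+\varphi$ with $\varphi=(\varphi_1,\dots,\varphi_p)\in\mathcal{C}^p$, thereby identifying the feasible set of $(\mathcal{Q})$ with $\mathcal{C}^p$, equipped with the distance $d(\varphi,\tilde\varphi)=\Vert\varphi-\tilde\varphi\Vert$, where $\Vert\varphi\Vert:=\sum_{i=1}^p\Vert\varphi_i\Vert_\infty$. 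Since a uniform limit of convex functions is convex, $\mathcal{C}^p$ is closed in the Banach space $C(B)^p$, hence a complete metric space. On it, $J$ is bounded below (indeed $J\ge0$ by construction) and continuous for $d$: the linear part $\psi\mapsto\int_{B^p}\sum_i\psi_i\,d\gamma_0$ is continuous, and $\psi\mapsto\Box_i\psi_i$ is $1$--Lipschitz for the sup norm, so $\psi\mapsto\int_{pB}\Box_i\psi_i\,dm_0$ is continuous as well; in particular $J$ is lower semicontinuous, so Ekeland's principle applies.

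Given $\eps>0$, I would pick a feasible $\psi^0_\eps=w+\varphi^0_\eps$ with $J(\psi^0_\eps)\le\inf(\mathcal{Q})+\eps$, and apply Ekeland's variational principle (see e.g. \cite{Ektem}) with parameter $\lambda_\eps:=\sqrt\eps\,\max(1,\Vert\varphi^0_\eps\Vert)$. This yields a feasible $\psi_\eps=w+\varphi_\eps$ with $J(\psi_\eps)\le J(\psi^0_\eps)$, with $\Vert\varphi_\eps-\varphi^0_\eps\Vert\le\lambda_\eps$, and with the approximate minimality
\begin{equation*}
J(\psi)\ge J(\psi_\eps)-\frac{\eps}{\lambda_\eps}\,d(\psi,\psi_\eps)\qquad\text{for every feasible }\psi .
\end{equation*}
The first inequality gives $J(\psi_\eps)\le\inf(\mathcal{Q})+\eps$, i.e. point 1. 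For point 2, I would insert $\psi=\psi_\eps+\delta g$ with a fixed $g\in\mathcal{C}^p$ and $\delta>0$ (feasible, since $\mathcal{C}$ is a convex cone), divide by $\delta$, let $\delta\to0^+$ and use Lemma \ref{lemderiv}; this gives, for each $\eps$,
\begin{equation*}
\int_{B^p}\sum_{i=1}^pg_i(x_i)\,d(\gamma_{\psi_\eps}-\gamma_0)\le\frac{\eps}{\lambda_\eps}\,\Vert g\Vert\le\sqrt\eps\,\Vert g\Vert ,
\end{equation*}
and, $g$ being fixed, letting $\eps\to0^+$ yields point 2.

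The hard part is point 3, where one must differentiate towards $w$. I would take $\psi=\psi_\eps-\delta\varphi_\eps=w+(1-\delta)\varphi_\eps$, which is feasible for $\delta\in(0,1)$. The derivative computation in the proof of Lemma \ref{lemderiv} uses only continuity and boundedness of the direction together with the strict convexity of $\psi_\eps$, so it applies verbatim to the concave direction $-\varphi_\eps$ and, combined with the approximate minimality above, gives
\begin{equation*}
\int_{B^p}\sum_{i=1}^p\varphi_i^\eps(x_i)\,d(\gamma_{\psi_\eps}-\gamma_0)\ge-\frac{\eps}{\lambda_\eps}\,\Vert\varphi_\eps\Vert .
\end{equation*}
Here the perturbation size $\Vert\varphi_\eps\Vert$ may blow up as $\eps\to0^+$, which is precisely the obstacle, and it is resolved by the choice of $\lambda_\eps$: using $\Vert\varphi_\eps\Vert\le\Vert\varphi^0_\eps\Vert+\lambda_\eps$,
\begin{equation*}
\frac{\eps}{\lambda_\eps}\,\Vert\varphi_\eps\Vert\le\frac{\eps}{\lambda_\eps}\big(\Vert\varphi^0_\eps\Vert+\lambda_\eps\big)=\frac{\sqrt\eps\,\Vert\varphi^0_\eps\Vert}{\max(1,\Vert\varphi^0_\eps\Vert)}+\eps\le\sqrt\eps+\eps ,
\end{equation*}
which tends to $0$ as $\eps\to0^+$; hence the lower bound tends to $0$ and $\liminf_{\eps\to0^+}$ of the integral is $\ge0$, which is point 3. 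The same choice makes $\eps/\lambda_\eps\le\sqrt\eps$, so it simultaneously serves point 2. Balancing these two requirements through $\lambda_\eps$ is the delicate step; everything else is a routine use of Ekeland's principle and of the directional derivative formula of Lemma \ref{lemderiv}.
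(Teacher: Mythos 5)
Your proof is correct and follows essentially the same route as the paper: take a near-minimizer of $(\mathcal{Q})$, apply Ekeland's variational principle with parameter $\lambda_\eps\sim\sqrt{\eps}\,\bigl(1+\Vert\varphi^0_\eps\Vert\bigr)$ (the paper's choice $k_\eps=\eps^{-1/2}(1+\Vert f_\eps\Vert)^{-1}$ is the same scaling), differentiate via Lemma \ref{lemderiv} in convex directions for point 2 and in the direction $-\varphi^\eps$ for point 3, with the parameter choice absorbing the possibly unbounded $\Vert\varphi^\eps\Vert$ exactly as in the paper's estimate $k_\eps\eps\Vert\varphi^\eps\Vert\to 0$. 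Your explicit verification that $\mathcal{C}^p$ is complete and that $J$ is continuous (so that Ekeland's principle applies) is a detail the paper leaves implicit, but the argument is otherwise identical.
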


\begin{proof}
For $\eps>0$, let  $f_\eps$ be admissible for $\Qrim$ and such that
\[J(f_\eps)\leq \inf \Qrim +\eps.\]
Let then $k_\eps>0$ be such that
\begin{equation}\label{condsurk}
\lim_{\eps \to 0^+}  \eps k_\eps[1 + \Vert f_\eps \Vert] =0 \; \mbox{ (for instance } k_\eps=\frac{1}{\eps^{1/2}(1+ \Vert f_\eps \Vert)}).
\end{equation}
It follows from Ekeland's variational principle (see \cite{Ekeland} and \cite{aubinek}) that  for every $\eps>0$, there is some $\psi_\eps$  admissible for $\Qrim$ such that
\begin{equation}\label{pve11}
\Vert \psi_\eps-f_\eps\Vert \leq \frac{1}{k_\eps}, \; J(\psi_\eps) \leq J(f_\eps) \leq \inf \Qrim +\eps,
\end{equation}
where $\Vert  h \Vert$  stands for the sum of the  uniform norms of the $h_i$ functions, and
\begin{equation}\label{pve12}
J(\psi) \geq J(\psi_\eps)-k_\eps \eps \Vert \psi-\psi_\eps\Vert, \; \forall \psi =(\psi_1,..., \psi_p) \; : \; \psi_i-w_i \in \CC, \; \forall i.
\end{equation}
Taking $\psi=\psi_\eps+\delta \varphi$ with $\delta>0$ and $\varphi\in \CC^p$ in \pref{pve12}, dividing by $\delta$ and letting $\delta\to 0^+$,  one thus gets by the virtues of Lemma \ref{lemderiv}
\begin{equation}\label{ineg321}
\int_{B^p} \sum_{i=1}^p \varphi_i(x_i) d (\gamma_0-\gamma_{\psi_\eps})\geq -k_\eps \eps \Vert \varphi \Vert.
\end{equation}
Using \pref{condsurk} and letting $\eps\to 0^+$ yields
\begin{equation}\label{almostdominance}
\limsup_{\eps \to 0^+} \int_{B^p} \sum_{i=1}^p \varphi_i(x_i) d (\gamma_{\psi_\eps} -\gamma_0) \leq 0
\end{equation}
for every $(\varphi_1,..., \varphi_p)\in \CC^p$.
To prove the last assertion of the lemma, write $\psi_\eps= \varphi^\eps+w$ with $\varphi^\eps\in \CC^p$. Then for $\delta \in (0,1)$ one has $\psi_\eps-\delta \varphi^\eps=(1-\delta) \varphi^\eps+w$, and then \pref{pve12} can be applied to $\psi_\eps-\delta \varphi^\eps$, leading to
\[ \frac{1}{\delta} [J (\psi_\eps-\delta \varphi^\eps)-J(\psi_\eps)] \geq -k_\eps \eps \Vert \varphi^\eps \Vert,\]
which, letting $\delta\to 0^+$ and using the same argument as in lemma \ref{lemderiv}, leads in turn to
\[ \int_{B^p} \sum_{i=1}^p \varphi^{\eps}_i(x_i) d (\gamma_{\psi_\eps}-\gamma_0)  \geq -k_\eps \eps \Vert \varphi^\eps\Vert.\]
By \pref{condsurk} and \pref{pve11}, it follows that
\[ k_\eps \eps \Vert \varphi^\eps\Vert \leq  k_\eps \eps (\Vert w\Vert +\Vert \psi_\eps-f_\eps\Vert + \Vert f_\eps\Vert)  \leq   k_\eps \eps  \Vert w\Vert + \eps + k_\eps \eps \Vert f_\eps\Vert \to 0 \mbox{ as } \eps\to 0^+,\]
which enables us to conclude that
\begin{equation}\label{slackapprox}
\liminf_{\eps \to 0^+} \int_{B^p} \sum_{i=1}^p \varphi_i^\eps (x_i) d (\gamma_{\psi_\eps} -\gamma_0) \geq  0.
\end{equation}
\end{proof}

\begin{lem}
\label{lemcvgg} Let $\psi_\eps$ be as in lemma \ref{ekelandvp} and set $%
\gamma_\eps:=\gamma_{\psi_\eps}$ then up to some subsequence, $\gamma_\eps$
weakly star converges to some $\gamma$ ($w$-comonotone by construction)
such that $\gamma\in \mathcal{M}_B(m_0)$ and $\gamma$ dominates $\gamma_0$.
Moreover $\gamma$ solves $(\mathcal{P}^*)$.
\end{lem}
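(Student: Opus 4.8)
The plan is to extract a weak-$*$ limit from the family $(\gamma_\eps)$ furnished by Lemma~\ref{ekelandvp}, and then to verify successively that this limit is admissible, dominates $\gamma_0$, and is optimal for $(\mathcal{P}^*)$. Since each $\gamma_\eps = \gamma_{\psi_\eps}$ is supported in the fixed compact set $B^p$, the family is tight and bounded in total variation, so by the Banach--Alaoglu/Prokhorov theorem some subsequence weak-$*$ converges to a probability measure $\gamma$ supported in $B^p$. The measure $\gamma$ is $w$-comonotone essentially by construction: each $\gamma_\eps$ is $w$-strictly comonotone (it equals $\gamma_{\psi_\eps}$ with $\psi_{i,\eps}-w_i$ convex), so by Definition~\ref{defcom22} the limit of this sequence of $w$-strictly comonotone allocations is $w$-comonotone.

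**First I would** check $\gamma \in \mathcal{M}_B(m_0)$. The constraint $\Pi_\Sigma \gamma = m_0$ passes to the weak-$*$ limit: for a fixed $\varphi \in C_0(\mathbb{R}^d,\mathbb{R})$ the map $\gamma \mapsto \int_{B^p}\varphi(\sum_i x_i)\,d\gamma$ is weak-$*$ continuous (the test function $x \mapsto \varphi(\sum_i x_i)$ is continuous on $B^p$), and each $\gamma_\eps$ satisfies $\int \varphi(\sum_i x_i)\,d\gamma_\eps = \int \varphi\,dm_0$ because every $\gamma_{\psi_\eps}$ shares $x$ exactly (recall $\sum_i T^i_{\psi_\eps}(x) = x$ from \eqref{samesum}). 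Hence $\Pi_\Sigma \gamma = m_0$, and since $\gamma(B^p)=1$, indeed $\gamma \in \mathcal{M}_B(m_0)$.

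**Next**, the domination $\gamma \succcurlyeq \gamma_0$ follows by passing to the limit in part~2 of Lemma~\ref{ekelandvp}. Fix $(\varphi_1,\dots,\varphi_p)\in\mathcal{C}^p$; the functional $\gamma \mapsto \int_{B^p}\sum_i \varphi_i(x_i)\,d\gamma$ is weak-$*$ continuous on measures supported in $B^p$, so $\int_{B^p}\sum_i\varphi_i(x_i)\,d\gamma_\eps \to \int_{B^p}\sum_i\varphi_i(x_i)\,d\gamma$ along the subsequence. Combined with $\limsup_\eps \int_{B^p}\sum_i\varphi_i(x_i)\,d(\gamma_\eps-\gamma_0)\leq 0$ from Lemma~\ref{ekelandvp}, this gives $\int_{B^p}\sum_i\varphi_i(x_i)\,d(\gamma-\gamma_0)\leq 0$ for every $(\varphi_i)\in\mathcal{C}^p$, i.e.\ each marginal $\gamma^i$ dominates $\gamma_0^i$; thus $\gamma\in K(\gamma_0)$ and $\gamma$ dominates $\gamma_0$ in the sense of \eqref{domingam}.

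**The main obstacle** is optimality of $\gamma$ for $(\mathcal{P}^*)$, and here the full strength of Ekeland's principle is needed. By Lemma~\ref{dualite11}, $\max(\mathcal{P}^*) = \inf(\mathcal{Q}) - \int_{B^p}\sum_i w_i\,d\gamma_0$, so it suffices to show $-\int_{B^p}\sum_i w_i\,d\gamma = \inf(\mathcal{Q}) - \int_{B^p}\sum_i w_i\,d\gamma_0$, equivalently $\int_{B^p}\sum_i w_i(x_i)\,d(\gamma_0-\gamma) = \inf(\mathcal{Q})$. I would compute this gap through the slackness relation $\sum_i\psi_{i,\eps}(x_i) = \sum_i w_i(x_i) + \sum_i \varphi_i^\eps(x_i)$. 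Writing $J(\psi_\eps) = \int_{B^p}\sum_i\psi_{i,\eps}(x_i)\,d\gamma_0 - \int_{pB}\Box_i\psi_{i,\eps}\,dm_0$ and using that $\gamma_\eps = \gamma_{\psi_\eps}$ realises the infimal convolution exactly (so $\int_{pB}\Box_i\psi_{i,\eps}\,dm_0 = \int_{B^p}\sum_i\psi_{i,\eps}(x_i)\,d\gamma_\eps$), one gets the identity
\begin{equation*}
J(\psi_\eps) = \int_{B^p}\sum_{i=1}^p \psi_{i,\eps}(x_i)\,d(\gamma_0-\gamma_\eps).
\end{equation*}
Decomposing $\psi_{i,\eps}=w_i+\varphi_i^\eps$ then yields
\begin{equation*}
J(\psi_\eps) = \int_{B^p}\sum_{i=1}^p w_i(x_i)\,d(\gamma_0-\gamma_\eps) + \int_{B^p}\sum_{i=1}^p \varphi_i^\eps(x_i)\,d(\gamma_0-\gamma_\eps).
\end{equation*}
Taking $\liminf_{\eps\to 0^+}$: the left side is bounded above by $\inf(\mathcal{Q})+\eps \to \inf(\mathcal{Q})$ (part~1), the $w$-term converges to $\int_{B^p}\sum_i w_i\,d(\gamma_0-\gamma)$ by weak-$*$ continuity, and the $\varphi^\eps$-term has nonpositive $\limsup$ by part~3 of Lemma~\ref{ekelandvp} (which bounds $\liminf\int\sum_i\varphi_i^\eps\,d(\gamma_\eps-\gamma_0)\geq 0$). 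This gives $\int_{B^p}\sum_i w_i\,d(\gamma_0-\gamma) \geq \inf(\mathcal{Q})$. The reverse inequality is automatic since $\gamma\in K(\gamma_0)$ makes $\gamma$ admissible for $(\mathcal{P}^*)$, whose value is $\inf(\mathcal{Q}) - \int\sum_i w_i\,d\gamma_0$; hence equality holds and $\gamma$ attains the supremum in $(\mathcal{P}^*)$. The delicate point is the bookkeeping in handling the two terms simultaneously under $\liminf$ while only one-sided estimates are available for each piece, so care is required to combine parts~1 and~3 of Lemma~\ref{ekelandvp} in the correct direction.
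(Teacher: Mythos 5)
Your proof is correct and follows essentially the same route as the paper: weak-$*$ compactness on $B^p$, passing to the limit in parts 2 and 3 of Lemma \ref{ekelandvp}, and the identity $J(\psi_\eps)=\int_{B^p}\sum_i\psi_{i,\eps}(x_i)\,d(\gamma_0-\gamma_\eps)$ decomposed via $\psi_{i,\eps}=w_i+\varphi_i^\eps$, combined with the duality of Lemma \ref{dualite11}. The only cosmetic difference is that the paper phrases the final estimate with $\eta=-\sum_i w_i$ and leaves the reverse inequality (admissibility of $\gamma$ for $(\mathcal{P}^*)$) implicit, which you spell out correctly.
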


\begin{proof}
By the Banach-Alaoglu-Bourbaki theorem, one may indeed assume that $\gamma_\eps$ weakly star converges to some $\gamma$. Obviously, $\gamma$ is $w$-comonotone and $\Pi_{\Sigma} \gamma=\Pi_{\Sigma } \gamma_0=m_0$, hence $\gamma \in \M_B(m_0)$. The fact that $\gamma$ dominates $\gamma_0$ is obtained by letting $\eps\to 0^+$ in \pref{almostdominance}. It remains to prove that $\gamma$ solves $\PPst$. Defining $\varphi^\eps:=\psi_\eps-w$ as in Lemma \ref{ekelandvp}, one has
\[J(\psi_\eps)=\int_{B^p} \sum_{i=1}^p \varphi_i^{\eps}(x_i) d (\gamma_0-\gamma_\eps)+\int_{B^p} \eta d(\gamma_\eps-\gamma_0) \to \inf \Qrim \mbox{ as } \eps \to 0^+.\]
By \pref{slackapprox}, passing to the limit thus yields
\[\inf \Qrim \leq \int_{B^p} \eta d(\gamma-\gamma_0),\]
which, combined with Lemma \ref{dualite11}, gives
\[ \int_{B^p} \eta  d \gamma\geq \inf \Qrim + \int_{B^p} \eta d\gamma_0=\max \PPst.\]
\end{proof}

\begin{lem}
\label{dominstrict} Let $\gamma$ be as in lemma \ref{lemcvgg}. Then:
\begin{enumerate}
\item if $\gamma_0$ solves $(\mathcal{P}^*)$ then $\gamma_0$ is $w$%
-comonotone,

\item $\gamma$ strictly dominates $\gamma_0$ unless $\gamma_0$ is
itself $w$-comonotone.
\end{enumerate}
\end{lem}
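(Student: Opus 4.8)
The plan is to establish Part 1 first and then derive Part 2 from it by a marginal-and-value argument. For Part 1, assume $\gamma_0$ solves $\PPst$. Since every marginal dominates itself, $\gamma_0\in K(\gamma_0)$ is feasible, so the value of $\PPst$ at $\gamma_0$ equals $-\int_{B^p}\sum_i w_i(x_i)\,d\gamma_0$; matching this with the duality identity of Lemma \ref{dualite11} forces $\inf\Qrim=0$. I would then fix a minimizing sequence $\psi_n=(\psi_{1,n},\dots,\psi_{p,n})$ for $\Qrim$ with $J(\psi_n)\to 0$, noting that each $\gamma_{\psi_n}$ is $w$-strictly comonotone by construction. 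It then suffices to prove $\gamma_{\psi_n}\weakstarto\gamma_0$, for then $\gamma_0$ is $w$-comonotone by Definition \ref{defcom22}.

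The crux is a quantitative strict-convexity estimate converting $J(\psi_n)\to 0$ into convergence of the optimal-split maps. Writing $y^n:=T_{\psi_n}(\sum_j x_j)$, the map $\Phi_n:x\mapsto y^n$ pushes $\gamma_0$ forward to $\gamma_{\psi_n}$, and the integrand of $J$ is $h_n(x)=\sum_i[\psi_{i,n}(x_i)-\psi_{i,n}(y^n_i)]\ge 0$. Comparing the optimal split $y^n$ of $\sum_j x_j$ with the competing split $\tfrac12(x+y^n)$ (which lies in $B^p$ and has the same sum), and using that $\psi_{i,n}-w_i$ is convex while $w_i$ is strictly convex, I obtain a uniform modulus $\tilde\rho_i>0$ on the compact set $B$ such that
\[ h_n(x)\ \ge\ 2\sum_{i=1}^p \tilde\rho_i\big(\abs{x_i-y^n_i}\big). \]
Integrating against $\gamma_0$ and letting $n\to\infty$ gives $\abs{x_i-y^n_i}\to 0$ in $\gamma_0$-measure for each $i$, whence $\Phi_n(x)\to x$ in $\gamma_0$-measure and $\gamma_{\psi_n}=(\Phi_n)_{\#}\gamma_0\weakstarto\gamma_0$.

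For Part 2, recall from Lemma \ref{lemcvgg} that $\gamma$ dominates $\gamma_0$, which is equivalent to $\gamma^i\succcurlyeq\gamma_0^i$ for every $i$. If $\gamma$ does not strictly dominate $\gamma_0$, then equality holds in (\ref{domingam}) for some strictly convex collection $(\varphi_i)$; since each summand obeys $\int\varphi_i\,d\gamma^i\le\int\varphi_i\,d\gamma_0^i$, every summand is an equality, and Lemma \ref{caractstrict} then forces $\gamma^i=\gamma_0^i$ for all $i$. Equal marginals yield $\int_{B^p}\sum_i w_i\,d\gamma=\int_{B^p}\sum_i w_i\,d\gamma_0$, so $\gamma_0$ attains in $\PPst$ the same value as the optimizer $\gamma$; being feasible, $\gamma_0$ is itself optimal, and Part 1 concludes that $\gamma_0$ is $w$-comonotone.

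I expect the quantitative estimate in Part 1 to be the main obstacle: extracting a uniform modulus of strict convexity for the $w_i$ on $B$ is not automatic from $C^1$ strict convexity and needs a compactness argument over $\{\abs{x_i-y_i}\ge t\}$, while the constraint $x_i\in B$ is precisely why I compare against the midpoint split rather than invoking the first-order optimality conditions with their boundary multipliers.
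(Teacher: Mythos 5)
Your proof is correct, and its Part 1 takes a genuinely different route from the paper's. Both arguments reduce, via Lemma \ref{dualite11}, to $\inf\Qrim=0$, and both then show that along a minimizing sequence the optimal-split maps satisfy $T_{\psi_n}(\sum_j x_j)\to x$ in $\gamma_0$-measure, whence $\gamma_{\psi_n}\weakstarto\gamma_0$ (legitimate here since everything is supported in the compact set $B^p$). The difference lies in how the coercive lower bound on the integrand of $J$ is obtained. The paper first smooths the minimizing sequence to $C^1(B)$ by a density argument, writes the first-order conditions \pref{kktpsi} for the constrained infimal convolution with multipliers $\lambda_i\ge 0$ on the boundary constraint, and combines them with a first-order (Bregman-type) modulus $\theta_i$ of strict convexity of $w_i$; the boundary terms $-\lambda_i y_i^\eps\cdot(x_i-y_i^\eps)$ are discarded using the fact that $B$ is a ball centered at $0$. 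Your midpoint comparison --- testing the optimal split $y^n$ against the feasible split $\tfrac12(x+y^n)$, whose components lie in $B$ by convexity and whose sum is again $\sum_j x_j$, and invoking the midpoint modulus $\tilde\rho_i(t)=\inf\{\tfrac12 w_i(a)+\tfrac12 w_i(b)-w_i(\tfrac{a+b}{2})\;:\;a,b\in B,\ \vert a-b\vert\ge t\}$, which is positive for $t>0$ by compactness and continuity --- yields the same type of estimate $h_n(x)\ge 2\sum_i\tilde\rho_i(\vert x_i-y_i^n\vert)$ with no smoothing, no multipliers, and no use of the geometry of $B$ beyond convexity and compactness; it is more elementary and slightly more robust (it would work verbatim for any convex compact $B$, whereas the paper's cancellation of the boundary terms exploits the centered-ball structure, and the paper's $C^1$-density step is itself left implicit). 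Your Part 2 is logically the contrapositive of the paper's: the paper deduces from ``$\gamma_0$ not $w$-comonotone'' a strict value gap $\int\eta\,d(\gamma-\gamma_0)>0$ and concludes strict dominance via the single collection $(w_i)$ and Lemma \ref{caractstrict}; you instead assume failure of strict dominance, use equality in \pref{domingam} for one strictly convex collection together with marginal dominance and Lemma \ref{caractstrict} to force $\gamma^i=\gamma_0^i$ for all $i$, hence optimality of the feasible $\gamma_0$ in $\PPst$, and then invoke Part 1. Both versions are sound; yours routes through equal marginals rather than the value gap, at the cost of one extra step, and your closing diagnosis of where the difficulty sits (a uniform modulus over $B$, and avoiding boundary multipliers via the midpoint split) is exactly right.
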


\begin{proof}
If $\gamma_0$ solves $\PPst$, it follows from Lemma \ref{dualite11} that $\inf \Qrim=0$. For any minimizing sequence $\psi_\eps$ (not necessarily the one constructed in Lemma \ref{ekelandvp})  of $\Qrim$, the following holds:
\[\begin{split}
0=\lim_{\eps\to 0^+}  J(\psi_\eps)=\lim_{\eps\to 0^+}  \int_{B^p} \Big(\sum_{i=1}^p \psi_{i, \eps} (x_i)- \Box_i \psi_{i, \eps} \Big(\sum_{i=1}^p x_i\Big) \Big) d\gamma_0(x)\\
= \lim_{\eps\to 0^+}  \int_{B^p} \Big(\sum_{i=1}^p \psi_{i, \eps} (x_i)-\sum_{i=1}^p \psi_{i, \eps} \Big(T_{\psi_\eps}^i\Big(\sum_{i=1}^p x_i\Big)\Big) \Big) d\gamma_0(x).
\end{split}\]
By a density argument, one may consider $\psi_\eps$ a minimizing sequence such that each $\psi_\eps$ belongs to $C^1(B)$. Fix $(x_1,..., x_p)$ and set $x:=\sum_{i=1}^p x_i$. Then $y^\eps:= T_{\psi_\eps}(x)$ can be characterized by the fact that there is a vector $p\in \R^d$ and a vector $(\lambda_i)$ of nonnegative weights such that
\begin{equation}\label{kktpsi}
\nabla \psi_{i, \eps}(y_i^\eps)=p-\lambda_i y_i^\eps, \; \lambda_i =0 \mbox{ if $y_i^{\eps}\notin \partial B$}, \; \sum_{i=1}^p y_i^{\eps} = x.
\end{equation}
On the other hand, since $w_i$ is strictly convex and $\psi_{i,\eps}-w_i \in \CC$, it follows that for any $a$ and $b$ in $B^2$,
\begin{equation}\label{modulpsi}
\psi_{i, \eps}(b)-\psi_{i, \eps} (a) \geq \nabla \psi_{i, \eps}(a) \cdot (b-a)+\theta_i(\vert b-a \vert)
\end{equation}
where function $\theta_i$ is defined by 
\[\theta_i(t):=\inf \{ w_i(b)-w_i(a)-\nabla w_i(a) \cdot(b-a), \; (a,b)\in B^2, \; \vert a-b \vert \geq t\}\]
for any $t\in [0, {\rm{diam}}(B)]$. Function $\theta_i$ (the modulus of strict convexity of $w_i$) is a  nondecreasing function such that $\theta_i(0)=0$ and $\theta_i(t)>0$ for $t>0$. Combining \pref{kktpsi} and \pref{modulpsi} yields
\[\begin{split}
\sum_{i=1}^p \psi_{i, \eps} (x_i)-\sum_{i=1}^p \psi_{i, \eps} (y_i^\eps)\geq \sum_{i=1}^p   \nabla \psi_{i, \eps} (y_i^\eps) \cdot (x_i-y_i^{\eps}) + \sum_{i=1}^p \theta_i (\vert x_i-y_i^{\eps}\vert) \\
= p\cdot  \sum_{i=1}^p (x_i-y_i^{\eps}) -\sum_{i=1}^p \lambda_i y_i^\eps (x_i -y_i^\eps) +   \sum_{i=1}^p \theta_i (\vert x_i-y_i^{\eps}\vert)\\
\geq   \sum_{i=1}^p \theta_i (\vert x_i-y_i^{\eps}\vert).
\end{split}\]
Hence the fact that $J(\psi_\eps)\to 0$ as $\eps\to 0^+$  implies
\[\lim_{\eps\to 0^+} \int_{B^p} \sum_{i=1}^p  \theta_i (\vert x_i-T_{\psi_\eps}^i(\sum_j x_j) \vert) d\gamma_0(x)=0\]
so that
\[T_{\psi_\eps}(\sum_j x_j)-x \to 0 \mbox{ as $\eps\to 0^+$ for $\gamma_0$-a.e. $x$}.\]
Therefore, by Lebesgue's dominated convergence theorem,
\[\begin{split}
\int_{B^p}  f(x) d\gamma_0(x) &= \lim_{\eps \to 0^+} \int_{B^p} f(T_{\psi_{\eps}}(\sum_j x_j )) d\gamma_0(x)
=\lim_{\eps \to 0^+} \int_{pB} f(T_{\psi_{\eps}}(x )) dm_0(x)\\
&= \lim_{\eps \to 0^+} \int_{B^p} f  d\gamma_{\psi_\eps}
\end{split}\]
holds for all $f\in C(B^p)$. Hence, $\gamma_{\psi_\eps}$ weakly star converges to $\gamma_0$ which proves that $\gamma_0$ is $w$-comonotone and Point $1$ is proven. We now prove Point $2$. If $\gamma_0$ is not $w$-comonotone then by Point $1$, it does not solve $\PPst$ and thus $\int \eta d (\gamma-\gamma_0) >0$ so that
\[\int_{B^p} \sum_{i=1}^p w_i(x_i) d\gamma < \int_{B^p} \sum_{i=1}^p w_i(x_i) d\gamma_0\]
hence $\gamma$ strictly dominates $\gamma_0$. This completes the proof.

\end{proof}

\subsection{Proof of Theorem \ref{dominance1d}}

Let $\gamma_0:=\LL(\xx)=\LL(X_1,..., X_p)$ and consider again the minimization problem $\PPst$. It follows from Lemma \ref{lemcvgg} that there exists a $w$-comonotone solution $\gamma$ to $\PPst$. By construction, $\gamma$ dominates $\gamma_0$, and  by the definition of $w$-comontonicity, there is a sequence $\gamma_n$ of $w$-strictly comonotone allocations that weakly star converges to $\gamma$. Lemma \ref{caractloi} allows to write $\gamma_n=\LL(\yy_n)$ for some $\yy_n\in \A(X)$ which is obviously comonotone in the univariate sense according to Definition \ref{defcomon1d}. Since $\yy_n$ is bounded in $L^{\infty}$ and each $\yy_n$ is a $1$-Lipschitz function of $X$, one may assume that $\yy_n$ converges uniformly (up to a subsequence) to some $\yy\in \A(X)$. It is obvious that $\yy$ is also comonotone and that $\gamma=\LL(\yy)$, hence $\yy$ dominates $\xx$. The strict dominance assertion follows from Lemma \ref{dominstrict}.


\begin{thebibliography}{99}
\bibitem{Arrow} K.J. Arrow, Uncertainty and the welfare of medical care,
Amer. Econom. Rev. 53 (1963), 941--973.

\bibitem{Arrow2} K.J. Arrow, Optimal insurance and generalized deductibles,
Scand. Actuar. J. (1974), 1--42.


\bibitem{Atkinson} A.B. Atkinson,  On the measurement of Inequality,
  Journal of Economic Theory 2 (1970) 244--263.

\bibitem{Attanasio} Attanasio, O., Davis, S., Relative Wage Movements and the Distribution of Consumption, Journal of Political Economy, 104(6), pp. 1227-62.

\bibitem{aubinek} J.-P. Aubin, I. Ekeland, Applied Nonlinear Analysis, John
Wiley ans Sons, New-York, 1984.

\bibitem{Borch} K. Borch, Equilibrium in a reinsurance market, Econometrica,
30 (1962), 424--444.

\bibitem{brenier} Y. Brenier, Polar factorization and monotone rearrangement
of vector-valued functions, Comm. Pure Appl. Math. 44 (1991), 375--417.

\bibitem{brown} Brown, D.J. and R.L. Matzkin, Testable Restrictions on the Equilibrium Manifold, Econometrica, Vol. 64 (1996), No. 6, p. 1249-1262.

%\bibitem{cd1} G. Carlier, R.-A. Dana, Rearrangement inequalities in
%nonconvex economic models, J. Math. Econ. 41 (2005), 483--503.

\bibitem{cd2} G. Carlier, R.-A. Dana, Law invariant concave utility
functions and optimization problems with monotonicity and comonotonicity
constraints, Statistics and Decisions, 24 (2006), 127--152.

\bibitem{cd3} G. Carlier, R.-A. Dana, Two-Persons Efficient Risk-Sharing and
Equilibria for Concave Law-Invariant Utilities, Economic Theory, 36 (2008),
189--223.


\bibitem{cfm} P. Cartier, J.M.G. Fell, P.-A. Meyer, Comparaison des mesures port\'ees par un ensemble convexe compact, Bull. Soc. Math. France, 92 (1964), 435--445.


\bibitem{Chew}  S.H. Chew,  I. Zilcha, Invariance of the efficient sets
when the expected utility hypothesis is relaxed,  Journal of
Economic   Behaviour and Organisation  13   (1990)
125--131.



%\bibitem{chong} K.M. Chong, N.M. Rice, Equimeasurable Rearrangements of
%functions, Queen's Papers in Pure and Applied Mathematics 28, 1971.

\bibitem{dana} R.-A. Dana, Market behavior when preferences are generated by second order stochastic dominance,   Journal of  Mathematical
Economics 40, (2004),     619-639.

\bibitem{DanaMeilijson} R.-A. Dana, I.I Meilijson, Modelling Agents'
Preferences in Complete Markets by Second Order Stochastic Dominance.
Working Paper 0333, Cahiers du CEREMADE
\bibitem{denneberg} D. Denneberg, Non-additive Measures and Integral, Kluwer
Academic Publishers, Holland, 1994.

\bibitem{Dybvig}  P. Dybvig,   Distributional Analysis of Portfolio Choice,
  Journal of Business 61 (1988)  369--393.

\bibitem{Ekeland} I. Ekeland, On the variational principle. J. Math. Anal.
Appl. 47 (1974) 324--353.

\bibitem{Ektem} I. Ekeland, R. Temam, Convex analysis and variational
problems, North-Holland, Amsterdam-Oxford, 1976.

\bibitem{GH11} A. Galichon, M. Henry, Dual theory of choice with multivariate risks, forthcoming in the Journal of Economic Theory.

\bibitem{EGH09} I. Ekeland, A. Galichon, M. Henry, Comonotonic measures of
multivariate risk, forthcoming in Mathematical Finance.
\bibitem{FolSch}H.  F\"{o}llmer, A. Schied,   Stochastic finance. An
introduction in discrete time ,
De Gruyter editor, Berlin  2004.

\bibitem{gs} W. Gangbo, A. \'Swi\c ech, Optimal maps for the multidimensional
Monge-Kantorovich problem, Comm. Pure Appl. Math., 51 (1998), 23--45.

\bibitem{jn1} E. Jouini, C. Napp, Comonotonic processes, Insurance Math.
Econom. 32 (2003), 255--265.

\bibitem{jn2} E. Jouini, C. Napp, Conditional comonotonicity, Decis. Econ.
Finance 27 (2004), 153--166.

\bibitem{Jouini}  E. Jouini ,  H. Kallal, (2000) Efficient Trading Strategies in
the  Presence of Market Frictions, Review of
Financial Studies 14 (2000)  343--369.


\bibitem{jst} E. Jouini, W. Schachermayer, N. Touzi, Optimal risk sharing
for law invariant monetary utility functions, Math. Finance 18 (2008),
269--292.

\bibitem{Landsberger} M. Landsberger, I.I. Meilijson, Comonotone
allocations, Bickel Lehmann dispersion and the Arrow-Pratt measure of risk
aversion, Annals of Operation Research 52 (1994) 97--106.

\bibitem{leroy} S.F. LeRoy,  J. Werner,  Principles of Financial
Economics.  Cambridge University  Press, Cambridge,  2001.


\bibitem{lyapunov} A. Lyapunov, Sur les fonctions-vecteurs completement additives, Bull. Acad. Sci. URSS (6) (1940), 465-478.


\bibitem{Rusch} M. Ludkovski, L. R\"{u}schendorf, On comonotonicity of Pareto
optimal risk sharing, Statistics and Probability Letters 78 (2008),
1181-1188.

\bibitem{Mul}  A. M\"{u}ller,     D. Stoyan,   Comparaisons Methods for
Stochastic Models and Risks , Wiley, New-York,  2002.




\bibitem{Peleg} B. Peleg,  M.E. Yaari,   A Price Characterisation of
Efficient Random Variables,   Econometrica 43  (1975)
283--292.



\bibitem{scarsini} G. Puccetti, M. Scarsini, Multivariate comonotonicity,
Journal of Multivariate Analysis 101 (2010), 291--304.

\bibitem{Rothschild} M. Rothschild, J. E. Stiglitz,. Increasing Risk, I. A
Definition, Journal of Economic Theory 2 (1970) 225--243.

\bibitem{Ru06} L. R\"{u}schendorf, Law invariant convex risk measures for portfolio vectors,  Statist. Decisions  24  (2006),  97--108.


%\bibitem{ryff} J.V. Ryff, Measure preserving Transformations and
%Rearrangements, Journal of Mathematical Analysis and Applications 31 (1970)
%449- 458.



\bibitem{strassen} V. Strassen, The existence of probability measures with given marginals,  Ann. Math. Statist.  36  (1965) 423--439.


\bibitem{townsend}  R.M. Townsend,  Risk and Insurance in village India, Econometrica 62 (1994)
539-592.

\bibitem{wilson}  R. Wilson,  The theory of syndicates, Econometrica 55 (1968),
95-115.

\end{thebibliography}
\end{document}